\documentclass[10pt,reqno]{amsart}
\usepackage[T1]{fontenc}
\usepackage{lmodern}
\usepackage{amsmath,amssymb,mathtools}
\usepackage{microtype}
\usepackage{xcolor}
\definecolor{revisiongreen}{rgb}{0,0.42,0.12}

\newcommand{\revisionb}[1]{{\color{revisiongreen}\hypersetup{linkcolor=revisiongreen,citecolor=revisiongreen,urlcolor=revisiongreen}#1}}
\usepackage[colorlinks]{hyperref}
\makeatletter
\@namedef{subjclassname@2020}{\textup{2020} Mathematics Subject Classification}
\makeatother
\hypersetup{pdftitle={Algebraic ellipticity of quartic double spaces},pdfauthor={Alexander Dvorsky, Shulim Kaliman, and Mikhail Zaidenberg}}

\newtheorem{theorem}{Theorem}[section]
\newtheorem{proposition}[theorem]{Proposition}
\newtheorem{lemma}[theorem]{Lemma}
\newtheorem{corollary}[theorem]{Corollary}
\theoremstyle{remark}
\newtheorem{remark}[theorem]{Remark}

\newcommand{\A}{\mathbb A}
\newcommand{\PP}{\mathbb P}
\newcommand{\OO}{\mathcal O}
\newcommand{\EE}{\mathcal E}
\newcommand{\Pic}{\operatorname{Pic}}
\newcommand{\Gr}{\operatorname{Gr}}
\newcommand{\Gm}{\mathbb G_m}
\newcommand{\Ga}{\mathbb G_a}
\DeclareMathOperator{\pr}{pr}
\DeclareMathOperator{\id}{id}
\newcommand{\doi}[1]{\href{https://doi.org/#1}{\nolinkurl{#1}}}

\title[Ellipticity of quartic double spaces]{Algebraic ellipticity 
of quartic double spaces}
\author{Alexander Dvorsky, Shulim Kaliman, and Mikhail Zaidenberg}
\dedicatory{In memory of YURI G. ZARHIN}
\address{Department of Mathematics, University of Miami, 
Coral Gables, FL 33124, USA}
\email{dvorsky@gmail.com}
\address{Department of Mathematics, University of Miami, 
Coral Gables, FL 33124, USA}
\email{kaliman@math.miami.edu}
\address{Univ. Grenoble Alpes, CNRS, IF, 
38000 Grenoble, France}
\email{mikhail.zaidenberg@gmail.com}
\date{}
\subjclass[2020]{Primary 14J30, 14M20; Secondary 14E07, 32Q56}
\keywords{Algebraic ellipticity, quartic double space, 
cubic hypersurface, birational involution, unirationality, algebraic spray}

\begin{document}

\begin{abstract}
We prove that every smooth quartic double $n$-fold, 
$n\ge2$, over an algebraically closed field of characteristic zero 
is algebraically elliptic in Gromov's sense. In particular, 
every smooth quartic double solid is algebraically elliptic, 
which answers Question~4.28 in \cite{Zai}.
We also prove that removing any closed subset of codimension at least two
preserves algebraic ellipticity for smooth quartic double spaces and
smooth cubic hypersurfaces of dimension at least two.
\end{abstract}

\maketitle

\section{Introduction}
Throughout, all varieties and morphisms are 
algebraic and defined
over an algebraically closed field $k$ of characteristic zero.
An \emph{algebraic spray} on a smooth algebraic variety $X$ is 
a vector bundle $\rho:E\to X$ together with a morphism 
$s:E\to X$ whose restriction to the zero section is $\rho$. 
It is \emph{dominating} if, for every $x\in X$, 
the differential in the fiber direction
\[
 d(s|_{E_x})_{0_x}:T_{0_x}E_x\longrightarrow T_xX
\]
is surjective. The variety $X$ is \emph{algebraically elliptic
in Gromov's sense}, or simply  \emph{algebraically elliptic},
if it admits a dominating algebraic spray, see \cite{Gro}. 

Every smooth complete rational variety is algebraically elliptic, 
by the uniform retract rationality theorem of Banecki \cite{Ban}
and its application to complete varieties.
For $n\ge 2$, every smooth cubic hypersurface 
in $\PP^{n+1}$ is algebraically elliptic, 
see \cite{KZcubic}. Thus, smooth cubic threefolds and 
very general cubic fourfolds 
give the first examples of 
algebraically elliptic smooth projective varieties which 
are irrational; the irrationality is due to Clemens and 
Griffiths \cite{CG} for cubic threefolds and to Katzarkov, 
Kontsevich, Pantev and Yu
\cite{KKPY} for cubic fourfolds.

In this paper, we show that similar results hold also 
for quartic double spaces. Let, for $n\ge2$,
\[
 \pi:X\longrightarrow\PP^n
\]
be a double cover branched along a smooth quartic 
hypersurface in $\PP^n$. It can be given in 
the weighted projective space $\PP(1^{n+1},2)$ 
by equation
\[
 w^2=F(x_0,\ldots,x_n),
\]
where $F$ is a smooth quartic form. We call $X$ 
a \emph{smooth quartic double $n$-fold}.

\begin{theorem}\label{thm:main} For $n\ge 2$,
every smooth quartic double $n$-fold over $k$ 
is algebraically elliptic.
\end{theorem}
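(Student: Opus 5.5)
The plan follows the approach used for cubic hypersurfaces in \cite{KZcubic}. We use the standard reduction that algebraic ellipticity of a smooth variety $X$ follows from a family of open subsets $U_i\subset X$ covering $X$, each carrying a finite collection of complete algebraic vector fields, or more generally $\Ga$- or $\Gm$-actions, or sprays defined on $U_i$ that extend suitably, whose values span $T_xX$ at every point $x$. Equivalently, by Gromov's localization lemma for sprays, it suffices to cover $X$ by open sets $U$ that are \emph{flexible enough}: for each $x$ we need finitely many algebraic families of rational curves, or maps $\A^1\to X$ defined over $U$, whose tangent directions at $x$ span $T_xX$. In the algebraic setting the most convenient criterion is the following. $X$ is algebraically elliptic if every point has a neighborhood $U$ such that $U$ admits a dominating family of sprays of the form $U\times\A^1\to X$ that extend to all of $X$. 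Each such spray is given by a morphism $\A^1$-bundle $\to X$ that is the identity on the zero section. We would construct these sprays from the geometry of lines and conics in $X$.

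Concretely, take a point $x\in X$ and the preimages $\pi^{-1}(L)$ of lines $L\subset\PP^n$ through $\pi(x)$. For a general line, $\pi^{-1}(L)$ is an elliptic curve, a double cover of $L$ branched at $4$ points, so it does not directly yield $\A^1$'s. Instead we use \emph{bitangent} lines, where $L$ is tangent to the quartic $B=\{F=0\}$ at two points. Then $\pi^{-1}(L)$ splits into two rational curves $C_\pm$, each mapping isomorphically onto $L$. For $n\ge2$, the bitangent lines through a general point form a family of dimension $n-2$ inside $\PP^{n-1}$ of directions, which is too small to span. We therefore expect to use the natural birational involutions instead. For $y\in X$ and a rational curve $C\ni x$ meeting $y$, reflection in the fibers of a conic-bundle or elliptic structure gives a map $\A^1\to X$. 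The key construction is the following: fix $x$, and for a line $L\ni\pi(x)$ take the elliptic curve $E_L=\pi^{-1}(L)$ with origin $x$. The group law then gives a family of automorphisms, namely translations, fibered over the space of lines. Translating by points $p\in E_L$ and varying $L$ gives a map $\Phi:\{(L,p)\}\to X$ sending $(L,0)\mapsto x$. Its differential at $p=x$ is surjective because the lines through $\pi(x)$ span all directions.

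Since $E_L$ is not $\A^1$, we convert these into genuine sprays as follows. Compose with the cubic-hypersurface-style trick: the translation by $p$ on $E_L$ is $q\mapsto$ the third point of $E_L$ cut out by a conic, a rational operation in $(x,p)$. Parametrize $p$ along an auxiliary rational curve (a component $C_\pm$ over a bitangent line, or a line in a tangent cone) through $x$. This yields morphisms from open subsets of $\A^m$-bundles over $U$ to $X$ that are dominating at $x$. We then invoke the extension/gluing step: on the complement of the indeterminacy, the maps extend after restricting to Zariski open $U$, and the open sets $U$ cover $X$ as $x$ varies. The theorem then follows from Gromov's localization principle, namely that a locally dominating family of sprays on a cover yields a global dominating spray.

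The main obstacle is this last step. The naive constructions are only rational maps, and we must ensure that the sprays are \emph{regular} on vector bundles over each $U$ and defined at the zero section, including at points $x$ over the branch divisor and at points where the relevant lines are special, for instance lines contained in tangent hyperplanes to $B$. We would first try the $n=2$ case, del Pezzo surfaces of degree $2$, where ellipticity is known from rationality via \cite{Ban}, to calibrate the construction. We would then handle general $n$ by restricting to planes $\Pi\ni\pi(x)$, on which $\pi^{-1}(\Pi)$ is a degree-$2$ del Pezzo surface and hence elliptic. Summing the resulting sprays over sufficiently many planes through $\pi(x)$, where the tangent spaces of the surfaces span $T_xX$, provides dominance. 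The delicate part is making this family of surface sprays vary algebraically with $\Pi$ and remain regular when $\pi^{-1}(\Pi)$ becomes singular.
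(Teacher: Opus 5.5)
There is a genuine gap. Lemma~\ref{lem:localization} needs, for an \emph{arbitrary} point $x$, morphisms $s:U\times\A^1\to X$ on a Zariski neighborhood $U$ of $x$. These must be regular on the whole affine line, satisfy $s(z,0)=z$, and have parameter derivatives spanning $T_xX$. Your proposal never constructs them. The localization step and the curves $E_\ell=\pi^{-1}(\ell)$ with their group law are only setup. The decisive step is deferred to ``the cubic-hypersurface-style trick'', ``an auxiliary rational curve'' and ``the extension/gluing step'', and as you describe them these give maps on \emph{open subsets} of $U\times\A^1$. That is exactly what cannot be used.

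The tools you name do not fill this gap:
\begin{itemize}
\item Translations on $E_\ell$ need an origin on every curve through every $z$, and give no formula with $s(z,0)=z$. Also, $(\ell,p)\mapsto p$ contracts the origin section, so its differential at a single $(\ell,x)$ has rank one, not $n$.
\item Special rational curves in $X$, such as components over bitangent lines (an $(n-3)$-dimensional family through a general point, not $n-2$), give no control over the indeterminacy of the composite map. That indeterminacy must be avoided along the entire curve for all $z$ near $x$.
\item The plane-section fallback only relocates the problem. Ellipticity of each surface $\pi^{-1}(\Pi)$ separately gives sprays on that surface. A spray on $X$ near $x$ instead needs a \emph{relative} dominating spray on a family of such surfaces through all $z\in U$. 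Banecki's theorem, applied fiber by fiber over $k$, gives nothing of this kind. The generic member of such a family is a degree-two del Pezzo surface over a function field, which need not be rational there.
\end{itemize}

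The missing ideas are those of Proposition~\ref{prop:criterion}.
\begin{itemize}
\item Use the reflection $i_p(x)=2\sigma(p)-x$ on $E_\ell$, which is an \emph{involution} for fixed $p$. Show that for every $x$, ramification points included, $p\mapsto i_p(x)$ is dominant; in the group law with origin $x$ it is $p\mapsto 2h_\ell-2p$, of degree four.
\item Use unirationality, which you never invoke. For dominant $\varphi:\PP^n\dashrightarrow X$, the map $\Psi(z,a)=i_{\varphi(a)}(z)$ goes from the smooth $X\times\PP^n$ to the proper $X$. Hence its indeterminacy has codimension at least two, and its fiber over a general $y$ has dimension at most $n-2$. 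So complete lines $L_i\subset\PP^n$ through $a$ miss the indeterminacy uniformly over a neighborhood of $y$ (Lemma~\ref{lem:avoid}).
\item For arbitrary $x$, choose $a$ so that $y=i_{\varphi(a)}(x)$ is general and $F_x:a'\mapsto i_{\varphi(a')}(x)$ is \'etale at $a$. Put $p=\varphi(a)$ and $s_i(z,t)=\Psi(i_p(z),\lambda_i(t))$ with $\lambda_i(0)=a$.
\end{itemize}
The involution gives $s_i(z,0)=z$. The complete lines give regularity on $U\times\A^1$. The identity $D_2\Psi_{(y,a)}=-di_p|_y\circ dF_x|_a$ gives spanning. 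Without the involution, the unirational parameter space, and this transport of an arbitrary $x$ to a general $y$, the regularity problem you call ``the main obstacle'' stays unresolved. So the proposal does not prove the theorem.
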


For $n=3$, this answers Question~4.28 in \cite{Zai}, 
which asks whether any smooth quartic double solid 
is algebraically elliptic. Ciliberto and the third author 
had previously constructed in \cite{CZ} small algebraic 
resolutions of certain \emph{nodal} rational quartic double 
solids that are algebraically elliptic.

This theorem supplies the first examples of smooth 
projective elliptic varieties 
that are not stably rational. Indeed, it is unknown 
whether general smooth cubic threefolds 
or fourfolds are stably irrational. Recently, it has been proved that  
$X\times\PP^1$ is irrational for 
any smooth cubic threefold $X$ in $\PP^4$, see \cite{GX}.

Artin and Mumford \cite{AM} constructed a singular 
quartic double solid with a smooth resolution that is 
unirational and not stably rational. The stable-irrationality 
of very general quartic double solids and fourfolds
is due, respectively,
to Voisin \cite{Voi} and to Hassett, Pirutka and Tschinkel  \cite{HPT}; 
a proof of their unirationality can be found, e.g., in Beauville \cite{Bea}.
Based on these results, we deduce the following corollary of 
Theorem \ref{thm:main}.

\begin{corollary}\label{cor:notstable}
For $n=3,4$, a very general smooth quartic double 
$n$-fold over $\mathbb C$ is algebraically elliptic 
and not stably rational.
\end{corollary}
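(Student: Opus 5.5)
The corollary is a direct combination of Theorem~\ref{thm:main} with the known stable irrationality results, so the proof is short. Fix $n\in\{3,4\}$ and work over $k=\mathbb C$. Smooth quartic double $n$-folds are parametrized by the smooth quartic forms $F$ in $n+1$ variables, that is, by the complement $U$ of the discriminant hypersurface in $\PP(H^0(\PP^n,\OO(4)))$. By Theorem~\ref{thm:main}, the double cover $X_F$ given by $w^2=F$ is algebraically elliptic for every $F\in U$, and in particular for a very general one. So only stable irrationality needs a citation.

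For $n=3$ I will invoke Voisin \cite{Voi}. She shows that a very general quartic double solid is not stably rational, meaning that this holds for $F$ outside a countable union of proper closed subvarieties of $U$. For $n=4$ I will invoke Hassett, Pirutka and Tschinkel \cite{HPT}, who prove the same statement for very general quartic double fourfolds.

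The only point requiring care is to check that the "very general" loci of those papers refer to the same parameter space, namely smooth quartic branch divisors in $\PP^n$. Both papers obtain their results by specialization: they degenerate to a suitable singular quartic double space that has a nontrivial unramified Brauer class (or a failure of a universal decomposition of the diagonal), and then they use the fact that stable irrationality specializes, so it holds for the very general smooth member of the family. The set of $F\in U$ for which $X_F$ is stably rational is therefore contained in a countable union of proper closed subsets of $U$. Its complement is exactly the "very general" condition in the corollary, and every member of that complement is algebraically elliptic by Theorem~\ref{thm:main}.

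I expect no real obstacle here. The substance lies entirely in Theorem~\ref{thm:main} and in the cited results.
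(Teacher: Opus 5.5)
Your proposal is correct and matches the paper's argument: Theorem~\ref{thm:main} gives ellipticity of every smooth member, and Voisin \cite{Voi} ($n=3$) and Hassett--Pirutka--Tschinkel \cite{HPT} ($n=4$) give stable irrationality of the very general member. One small slip in your background description: what specializes is stable rationality (more precisely, universal $\mathrm{CH}_0$-triviality), so an obstruction on the special fiber propagates to the very general fiber; saying that ``stable irrationality specializes'' reverses the direction, but this does not affect your proof, since you only cite those results.
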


The next corollary is analogous to Corollary 1.4 in \cite{KZcubic}.

\begin{corollary}\label{cor:cone}
Let $X$
be a smooth quartic double 
$n$-fold, $n\ge 2$, and $Y$ be a 
punctured affine cone over $X$ 
defined by an ample line bundle $L$, namely
\[
 Y=\operatorname{Spec}\!\left(\bigoplus_{m\ge0}H^0(X,L^{\otimes m})\right)
       \setminus\{\text{vertex}\}\simeq (L^{-1})^\times,
\]
where $\times$ denotes removal of the zero section. 
Then the following hold.
\begin{itemize}
\item[(a)]
$Y$ is elliptic in Gromov's sense.  
\item[(b)]
There exist surjective morphisms $\A^{n+1}\to X$ resp.
$\A^{n+2}\to Y$ which are smooth and surjective
on appropriate open subsets $U\subset\A^{n+1}$ 
resp. $V\subset\A^{n+2}$.
\item[(c)] If $k=\mathbb C$, then there exists 
a surjective morphism $\A^n\to X$
 which is smooth and surjective
on an appropriate open subset $U\subset\A^n$.
\item[(d)] The monoid of endomorphisms ${\rm End}(Y)$ 
acts $m$-transitively on $Y$ for every natural number $m$. 
\end{itemize}

\end{corollary}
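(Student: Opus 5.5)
The plan is to deduce all four items from Theorem~\ref{thm:main} together with general facts about elliptic varieties, in the same way as Corollary~1.4 is deduced in \cite{KZcubic}. No new geometry of quartic double spaces should be needed beyond ellipticity of $X$, its smoothness, and its unirationality.

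\textbf{Part (a).} Since $Y\simeq (L^{-1})^\times$ is a principal $\Gm$-bundle over $X$, I would invoke the known result that ellipticity passes from the base to punctured cones (Kaliman--Zaidenberg; see also \cite{Zai}). The key input there is that $X$ is \emph{uniformly rational} or, more generally, admits a dominating spray built from complete ``local'' sprays. The mechanism is to lift the sprays on $X$ to $\Gm$-equivariant sprays on $Y$ and then add the fiberwise $\Gm$-action to dominate the vertical direction. The subtle point is that a general dominating spray on $X$ need not lift to $Y$. I therefore expect this to be the main obstacle. To handle it, I would check that the dominating spray produced in the proof of Theorem~\ref{thm:main} is built as a composition of sprays associated with complete $\Ga$-type flows (or with families of rational curves) on open subsets, and that these lift to the line bundle $L^{-1}$ after twisting. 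If the proof of the theorem gives the stronger conclusion that $X$ is elliptic with sprays that are locally nilpotent or equivariant in a suitable sense, the lifting criterion of \cite{KZcubic} then applies verbatim.

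\textbf{Parts (b) and (c).} For (b), I would use the theorem of Kusakabe / Forstneri\v{c} type: every algebraically elliptic variety $Z$ of dimension $d$ admits a morphism $\A^{d+1}\to Z$ that is surjective and is smooth and surjective when restricted to a suitable open subset. Applying this to $X$ (with $d=n$) and to $Y$ (with $d=n+1$, using (a)) gives the two maps. For (c), over $\mathbb C$, I would use the sharper statement that a $d$-dimensional elliptic variety $Z$ admits a surjective morphism $\A^d\to Z$ that is smooth on an open subset mapping onto $Z$. This result is proved there via the Oka principle in the complex setting, and is cited in \cite{KZcubic} as well.

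\textbf{Part (d).} I would apply the result that on an elliptic affine-like (quasi-affine) variety, or more generally on any elliptic variety whose endomorphisms separate points, the monoid ${\rm End}(Y)$ acts $m$-transitively for all $m$ (the result cited in \cite{KZcubic} for punctured cones over cubics). Its proof composes spray maps with interpolation of finitely many points. This uses only (a) and the fact that $Y$ is quasi-affine, since it is an open subset of an affine cone, so that regular functions separate points.
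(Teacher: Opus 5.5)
Your (b)--(d) match the paper: (b) is \cite[Theorem~1.2]{Kus} applied to $X$ and, once (a) is known, to $Y$; (c) is \cite[Theorem~1.6]{For}; (d) is \cite[Proposition~6.1]{KZcone}, using that $Y$ is quasi-affine and elliptic. The gap is in (a), on which (d) and the $Y$-half of (b) depend. For (a) you give only a conditional plan, and the mechanism you describe does not work. The fiberwise $\Gm$-action cannot be added to dominate the vertical direction. Every morphism $\A^1\to\A^1\setminus\{0\}$ is constant, so no spray on $Y$ moves points within the fibers of $\rho\colon Y\to X$; compare Remark~\ref{rem:degree}, where $\PP^1\times(\A^1)^\times$ fails to be elliptic. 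Uniform rationality is not available for quartic double spaces, and the sprays of Proposition~\ref{prop:criterion} are not given by $\Ga$-actions. Citing the lifting argument of \cite{KZcubic} ``verbatim'' reproduces exactly the gap the paper points out: that argument invoked \cite[Corollary~3.8]{KZcone} without establishing the required curve-orbit property.

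Your parenthetical ``families of rational curves'' points the right way, but it is the whole content of (a) and you leave it unverified. What is needed is that the spray curves are complete rational curves on which $L^{-1}$ has negative degree. The vertical direction comes from this negativity, not from the $\Gm$-action.

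Concretely, Lemma~\ref{lem:avoid} gives full projective lines $L_i\ni a$ with $\Psi$ regular on $W\times L_i$. Hence each $s_i(z,t)=\Psi(i_p(z),\lambda_i(t))$ extends to $S_i\colon U\times\PP^1\to X$ with $S_i(z,a)=z$. Since $D_2\Psi_{(y,a)}$ is an isomorphism, $S_i(x,\cdot)$ is nonconstant, so $d_i=\deg S_i(x,\cdot)^*L>0$. Trivialize $M=L^{-1}$ on $U$. Then $(S_i^*M)^\times\simeq U\times Q_i$, where $Q_i=\OO_{\PP^1}(-d_i)^\times\simeq(\A^2\setminus\{0\})/\mu_{d_i}$ is flexible under $\mathrm{SL}_2$. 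Push forward the $\Ga$-subgroup sprays of $Q_i$ to local sprays on $Y$. At $y\in\rho^{-1}(x)$ their derivatives contain $\ker(d\rho)_y$, because the fiber of $Q_i$ over $a_i$ is identified with $\rho^{-1}(x)$. They also project onto $\sum_i (dS_i)_{(x,a_i)}(0,T_{a_i}\PP^1)=T_xX$. Lemma~\ref{lem:localization} then yields (a); this is Lemma~\ref{lem:bundlelift} combined with Theorem~\ref{thm:generalcone}.
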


The same construction gives the following removal theorem, 
which was motivated by the main result of \cite{FKZ16}.

\begin{theorem}\label{thm:removal}
Let $X$ be either a smooth quartic double $n$-fold or a smooth cubic
hypersurface in $\PP^{n+1}$, where $n\ge2$. If $Z\subset X$ is a closed
subset of codimension at least two, then $X\setminus Z$ is algebraically
elliptic.
\end{theorem}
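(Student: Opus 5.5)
The strategy is to use Gromov's localization principle for algebraic ellipticity. A smooth quasi-projective variety is algebraically elliptic as soon as it is covered by Zariski open sets $U_i$ such that each $U_i$ carries a spray $(E_i,\rho_i,s_i)$ dominating at every point of $U_i$. The sprays need not be complete: we only ask that $s_i$ be defined on $E_i|_{U_i}$ and take values in the whole variety. This criterion is usually stated via families of complete vector fields or via "locally dominating families of sprays". The version available in the algebraic setting (Gromov, Forstneri\v{c}, and the localization lemma used in \cite{KZcubic}) says that it suffices to have, for every point $x$, finitely many sprays on the whole variety whose fibre differentials at $x$ together span $T_x$. So the plan is to produce, for every point $x\in X\setminus Z$, sprays on $X\setminus Z$ defined globally whose images at $x$ span the tangent space.

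To construct these sprays, I would reuse the construction from the proof of Theorem~\ref{thm:main} (and of \cite{KZcubic} for cubics). That construction produces $\A^1$-fibrations, or families of lines and conics, through general points; typically they come from birational involutions. For the quartic double space these are induced by lines in $\PP^n$, whose preimages are genus-one curves, and one uses the group law relative to a point. For the cubic, they come from secant lines through a point, exactly as in \cite{KZcubic}. The resulting sprays have the form $s(x,v)=\phi_v(x)$, where $\phi_v$ is a family of automorphisms of open subsets, or a family of rational curves parametrized by $v$.

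The key new point is to make each spray avoid $Z$. Choose the auxiliary data, namely the base points, the lines, and the parameters of the family, generically, so that the curves used avoid $Z$. Since $\operatorname{codim}Z\ge2$, a general member of a family of curves covering $X$ with enough moving parameters misses $Z$. The image of the spray is then $\{s(x,v)\}$, and the bad locus $s^{-1}(Z)$ has codimension $\ge2$ in $E$ (or at least it can be arranged not to meet the zero section and to be small in each fibre). One then replaces $E$ by a subbundle, or composes with a fibrewise map $v\mapsto f(x)v$ where $f$ vanishes appropriately. The cleanest version of this is to take a dominating family of sprays $s_j$ on $X$, with $s_j(x,0)=x$. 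Then, for a regular function $h$ on a neighbourhood, replace each $s_j$ by the spray $s_j(x,h(x)v)$. Because $Z$ has codimension two, $h$ cannot be chosen to cut exactly $s^{-1}(Z)$. Instead I would use the fact that in the fibrewise direction the preimage of $Z$ along a one-parameter curve is a finite set disjoint from $0$ for generic choices. The sprays can then be taken of "line type", $v\mapsto$ a point on an $\A^1$-curve through $x$ missing $Z$.

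The main obstacle is exactly this: ensuring that for every point $x\in X\setminus Z$ (including the special points, not just general ones) there are enough curves through $x$ that avoid $Z$ entirely and whose tangent directions span $T_xX$. For a fixed $x$, the curves through $x$ form a family of dimension $\ge n-1$. Those meeting $Z$ form a subfamily of dimension $\le \dim Z + (\text{fibre dim}) \le n-2$. Hence the avoiding curves are dense, and their tangents span $T_x$. I would verify this dimension count separately for the conics/elliptic curves in the quartic double case and for the lines in the cubic case. I would also check that the spray can be written algebraically over an open neighbourhood of $x$ in $X\setminus Z$, and then conclude by the localization principle.
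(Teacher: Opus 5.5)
Your plan has a genuine gap. It never identifies a family of rational curves that actually yields dominating sprays, and the avoidance count it rests on is asserted at an arbitrary point $x$, where nothing supports it.

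The curves you name for the quartic double space, the genus-one curves $E_\ell$, cannot carry $\A^1$-sprays. You do not exhibit any other family of rational curves through every point whose tangents span $T_xX$. In the paper the parameter $\A^1$'s live in the source of a unirational parametrization $\varphi:\PP^n\dashrightarrow X$. With $\Psi(y,a')=i_{\varphi(a')}(y)$, the sprays are $s_i(z,t)=\Psi(i_p(z),\lambda_i(t))$ along projective lines $L_i\subset\PP^n$ through a point $a$, where $p=\varphi(a)$. So the curves through $x$ are $a'\mapsto\Psi(y,a')$, $a'\in L$, with $y=i_p(x)$. Those that hit $Z$ or run into indeterminacy correspond to lines $L\ni a$ meeting the fiber $J_y$ of $J=I\cup\overline{\Psi^{-1}(Z)}$.

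Thus your inequality ``$\dim Z+(\text{fibre dim})\le n-2$'' really amounts to $\dim J_y\le n-2$. That bound is only guaranteed for $y$ in a dense open subset of $X$, not at special points. Moreover, making the single curve through $x$ miss $Z$ is not enough. You need $s(z,t)\notin Z$ for all $z$ near $x$ and all $t\in\A^1$, and with an affine parameter curve, nearby members can meet $Z$ as $t\to\infty$.

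The paper supplies three ingredients your proposal lacks.
\begin{enumerate}
\item The involution transfers the problem from an arbitrary $x$ to a general point $y=i_p(x)$, using dominance of $a\mapsto i_{\varphi(a)}(x)$. Then only the generic fiber of $J\to X$ matters.
\item It proves $\dim J\le 2n-2$ by ruling out prime divisors of $X\times\PP^n$ inside $\overline{\Psi^{-1}(Z)}$.
\begin{itemize}
\item Divisors dominating $\PP^n$ are excluded by the pointwise estimate $\dim i^{-1}(y)\le n$ for every $y\in X$ (Lemmas~\ref{lem:cubicfibers} and~\ref{lem:doublefibers}). This gives condition~(b) of Proposition~\ref{prop:removal}.
\item Divisors of the form $X\times E$ are excluded by dominance of every $i_p$ (condition~(a)).
\end{itemize}
\item Lemma~\ref{lem:avoid} chooses complete lines $L_i$ through $a$ disjoint from $J_y$. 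Properness of $L_i$ propagates this avoidance to a neighborhood $W$ of $y$, which makes each $s_i$ a morphism $U\times\A^1\to X\setminus Z$.
\end{enumerate}
Without these steps, your general-position heuristic does not reach the special points of $X\setminus Z$.
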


The construction of a dominating spray on a cubic hypersurface
in \cite{KZcubic} 
exploits the technique of a moving family of involutions. 
Suppose that a smooth complete unirational $n$-fold $X$ 
carries a rational map 
\[X\times X \dasharrow X,\qquad
 (p,x)\longmapsto i_p(x),
\]
such that
\begin{itemize} \item
for a fixed $p\in X$, $i_p\colon X \dasharrow X$ is 
an involution in $x$ defined on a dense open subset of $X$, and
\item for a fixed $x\in X$, the rational map 
$X \dasharrow X$, $p\longmapsto i_p(x)$,
is dominant. 
\end{itemize} Following \cite{KZcubic}, we show in 
Proposition~\ref{prop:criterion} that the existence 
of such a moving family  of birational involutions
on a smooth complete unirational variety
implies its algebraic ellipticity. 

For a smooth cubic hypersurface $X$ in $\PP^{n+1}$
such a family of involutions  is classically known; 
indeed, let $i_p(x)$ be the third point 
of intersection of $X$ with the line through $p$ and $x$. 
For fixed $x$, 
symmetry gives $i_p(x)=i_x(p)$, where $i_x$ is the 
birational involution associated with the projection from $x$.
In particular, $p\mapsto i_p(x)=i_x(p)$ is dominant.

For a quartic double space $\pi\colon X\to \PP^n$, 
such a moving family of involutions 
can be constructed as follows.
Let $\ell$ be  a general line in $\PP^n$.
The inverse image $E_\ell=\pi^{-1}(\ell)$ on $X$ 
is a smooth elliptic curve. Let 
\[\sigma\colon X\to X,\qquad 
(x_0:\cdots:x_n:w)\longmapsto (x_0:\cdots:x_n:-w)\]
be the deck involution. Using an addition on $E_\ell$, 
 we define a hyperelliptic involution
\[
 i_p(x)=2\sigma(p)-x,\qquad p,x\in E_\ell,
\]
with ``center'' $p$. 
This formula is independent of the choice of origin. 
Taking $\sigma(p)$ as origin on $E_\ell$, 
the involution becomes $x\mapsto -x$ 
and its four fixed points are the two-torsion points.
For a fixed $x\in X$, we consider 
the family of elliptic curves $E_\ell=\pi^{-1}(\ell)$ on $X$ 
passing through $x$, 
above lines $\ell$ in $\PP^n$ passing through $\pi(x)$. 
Varying $p\in X$, the moving family  of involutions $\{i_p\}$
has the desired properties. Indeed,
the rational map $X \dasharrow X$, $p\longmapsto i_p(x)$,
is dominant of degree four. 
This remains true when $x$ lies on the ramification divisor.

To prove Theorem~\ref{thm:removal}, we use the same moving families and
the fact that the dimension of the inverse image of every point $y \in X$
under the reflection map is at most $n$.               
This allows the complete
parameter lines used in the spray construction to avoid $Z$.

The contents of the paper are as follows. 
In Section \ref{sec:criterion}, 
we provide the ellipticity criterion in terms of moving 
families of involutions, a removal criterion, and the fiber estimate for
cubic hypersurfaces. In Section \ref{sec:double},
we construct 
the required families of involutions 
for quartic double spaces and prove their fiber estimate, completing the
proofs of Theorems~\ref{thm:main} and~\ref{thm:removal}.
Theorem~\ref{thm:generalcone} in Section \ref{sec:cones} 
provides a general criterion
of ellipticity of affine cones in terms of moving families 
of involutions, that contains,  as special cases, the ellipticity 
of affine cones over cubic hypersurfaces 
and over double quartic spaces. 
In particular, this  leads to a proof of Corollary \ref{cor:cone}. 

\section{A moving-involution criterion}\label{sec:criterion}

In this section, we present a modified version of the ellipticity 
criterion in presence of a moving  family of involutions, which is used in 
\cite[Section 4]{KZcubic}.

 We first recall the local form of the extension and localization 
theory for algebraic sprays. A \emph{local rank-one spray 
with values in $X$} is a morphism
\[
 s:U\times\A^1\longrightarrow X,\qquad s(z,0)=z,
\]
where $U\subset X$ is open. The image need not lie in $U$. 
Its parameter derivative at $z\in U$ is $d(s|_{\{z\}\times\A^1})_0$.

\begin{lemma}[{\rm  \cite[Lemma~2.5]{KZcubic}}]\label{lem:localization}
Let $X$ be a smooth algebraic variety of dimension $n$.
Suppose that for every $x\in X$ there are local rank-one sprays 
with values in $X$, defined in an open neighborhood of $x$, 
whose parameter derivatives 
span $T_xX$. Then $X$ is algebraically elliptic.
\end{lemma}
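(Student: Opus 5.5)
The plan is to build a global dominating spray by gluing finitely many local rank-one sprays. Gromov's localization principle (via composite and sum operations on sprays) is exactly what turns local spanning data into a single global spray.

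\emph{Step 1: finitely many sprays suffice at every point.} Since $X$ is quasi-compact and the spanning condition is open, finitely many local rank-one sprays $s_j\colon U_j\times\A^1\to X$, $j=1,\dots,N$, already have the following property: for every $x\in X$, the derivatives of those $s_j$ with $x\in U_j$ span $T_xX$.

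\emph{Step 2: extend each local spray to a global one.} Fix $j$. Choose an effective divisor $D_j$ whose support is $X\setminus U_j$ (or at least contains it). On an affine cover, consider the rescaled map $(z,t)\mapsto s_j(z, h(z)t)$, where $h$ is a local function vanishing to high order along $D_j$. The rescaling is then encoded by the line bundle $L_j=\OO_X(-mD_j)$, with total space $\EE_j$: a point of $\EE_j$ over $z$ is a section value $v$, and we set $\tilde s_j(z,v)=s_j(z,\phi(v))$ for a trivialization $\phi$ of $L_j$ over $U_j$. The key claim is that for $m\gg0$ this extends to a morphism $\EE_j\to X$ sending the whole fiber over any $z\in X\setminus U_j$ to $z$. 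This is the standard argument of Gromov and of Forstneri\v{c}, Kaliman--Zaidenberg (and it is essentially \cite[Lemma~2.5]{KZcubic}, whose proof we would reproduce). It uses that $s_j$ is a morphism with $s_j(z,0)=z$: in local affine coordinates $s_j(z,t)=z+t\,g(z,t)$, where $g$ has poles of bounded order along $D_j$, and multiplying $t$ by a section vanishing to order $m$ cancels these poles. This extension step is the main obstacle. To carry it out, I would embed an affine neighborhood of $X$ into $\A^M$ and bound the pole order of the coordinates of $s_j$ along $D_j$; this is possible because $X\setminus U_j$ is a divisor after shrinking $U_j$ to an affine open set.

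\emph{Step 3: combine the extended sprays.} Each $\tilde s_j$ is a spray on the line bundle $\EE_j$. Its fiber derivative at $z\in U_j$ equals a nonzero multiple of that of $s_j$, and it is zero off $U_j$. Form the composite spray on the fiber product $\EE=\EE_1\oplus\cdots\oplus\EE_N$, namely $s(v_1,\dots,v_N)=\tilde s_N(\cdots\tilde s_1(v_1)\cdots)$. Strictly, the $v_j$ must be transported along the base point as it moves, so I would use Gromov's composite spray, which lives on an iterated pullback bundle. That bundle is not a vector bundle over $X$ a priori, so I would invoke the standard fact that the existence of a dominating composite spray yields a dominating spray on a vector bundle, obtained by pulling back and using that composite sprays can be replaced by vector-bundle sprays after twisting (Gromov, Forstneri\v{c}). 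The differential at the zero section is the sum of the individual fiber differentials, which by Step~1 span $T_xX$. Hence $X$ is algebraically elliptic.
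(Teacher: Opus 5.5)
Your Steps 1 and 2 follow the same route the paper relies on: local sprays are extended to global line-bundle sprays, which works without quasiprojectivity by \cite{KZcone}. The gap is in Step 3, where the real difficulty lies, and you dispose of it with a ``standard fact'' that is not standard. The composite spray lives on $\EE_1*\cdots*\EE_N$. This is a vector bundle over $\EE_1*\cdots*\EE_{N-1}$, but not over $X$. Whether a dominating family (or composite) of sprays can be replaced by a single dominating spray on a vector bundle over $X$ is exactly Gromov's question whether subellipticity implies ellipticity. That question stood open for a long time and was settled for smooth algebraic varieties only in \cite[Theorem~1.1]{KZsub}, which is precisely what the paper invokes at this point. ``Pulling back and twisting (Gromov, Forstneri\v{c})'' does not name a construction that achieves it. Once you cite \cite{KZsub}, your composite construction becomes superfluous: the sprays $\tilde s_j$ of Step 2 already form a dominating family, so $X$ is subelliptic.

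Alternatively, you can close the gap directly, because all your $\EE_j$ are line bundles. Let $(E,p,s)$ and $(F,q,s_F)$ be sprays on $X$ with $F$ a line bundle. The morphism $H\colon E\times\A^1\to X$, $H(e,\lambda)=s(\lambda e)$, satisfies $H(\cdot,0)=p$ and $H(\cdot,1)=s$. Since $E$ is smooth, $\Pic(E\times\A^1)=\pr_E^*\Pic(E)$, and hence $s^*F\cong p^*F$. So the composite total space $\operatorname{tot}(s^*F)=\{(e,f):s(e)=q(f)\}$ is isomorphic over $E$ to $\operatorname{tot}(p^*F)=\operatorname{tot}(E\oplus F)$. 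Transporting the composite spray along this isomorphism gives a spray on the vector bundle $E\oplus F$. Its fiber differential at $0_x$ has image $d(s|_{E_x})(E_x)+d(s_F|_{F_x})(F_x)$, because the isomorphism is linear on the fiber over $0_x$. Iterating over $j$ gives a dominating spray on $\EE_1\oplus\cdots\oplus\EE_N$.

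Separately, your sketch of Step 2 is too naive. The image $s_j(U_j\times\A^1)$ need not lie in any affine chart, so the ``coordinates'' of $s_j$ are only rational functions of $t$, not of the form $z+t\,g$ with $g$ polynomial in $t$. The bounded pole order therefore requires controlling where the curves $t\mapsto s_j(z,t)$ leave the chart as $z$ approaches $D_j$. That is the content of the extension lemma in \cite{KZcone}, which is the reference to use. Note that \cite[Lemma~2.5]{KZcubic} is the very statement you are proving.
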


The extension and localization results used here apply to smooth varieties
without a quasiprojectivity assumption; see \cite{KZcone}, together with
\cite[Theorem~1.1]{KZsub} for the equivalence of ellipticity and subellipticity.

The following lemma is analogous to Lemma 4.1 
and Corollary 4.2 in \cite{KZcubic}.

\begin{lemma}\label{lem:avoid}
Let $Y$ be a variety, let $n\ge2$, and let 
$I\subset Y\times\PP^n$ be closed. 
For $y\in Y$, let 
\[
 \quad I_y=\{a'\in\PP^n:(y,a')\in I\}.
\]
 Suppose that $\dim I_y\le n-2$ for some $y\in Y$.
Then for any $a\in\PP^n\setminus I_y$ 
there are $n$ projective lines $L_1,\ldots,L_n$ 
in $\PP^n$ through $a$, 
with linearly independent tangent directions at $a$, 
and an open neighborhood $W$ of $y$ in $Y$ such that
\[
 (W\times L_i)\cap I=\varnothing\qquad(1\le i\le n).
\]
\end{lemma}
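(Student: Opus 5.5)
The plan is to choose the lines first at the fixed point $y$ and then to spread the avoidance to a neighborhood of $y$ using properness of $\PP^1$.

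\textbf{Step 1: lines at $y$.} Consider the linear projection $\phi\colon\PP^n\setminus\{a\}\to\PP^{n-1}$ from $a$, whose target parametrizes the lines through $a$. Since $a\notin I_y$, the set $I_y$ is a closed subset of $\PP^n\setminus\{a\}$ of dimension at most $n-2$. Its image $B=\overline{\phi(I_y)}$ in $\PP^{n-1}$ therefore has dimension at most $n-2$, so $B$ is a proper closed subset. A line $L$ through $a$ meets $I_y$ exactly when its direction lies in $\phi(I_y)\subset B$. The complement $\PP^{n-1}\setminus B$ is a nonempty Zariski open set, hence dense. A dense subset of $\PP^{n-1}$ cannot lie in a hyperplane, so we can pick $n$ points of it spanning $\PP^{n-1}$. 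These give lines $L_1,\dots,L_n$ through $a$ with linearly independent tangent directions at $a$ and $L_i\cap I_y=\varnothing$.

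\textbf{Step 2: spreading to a neighborhood.} For each $i$, set $C_i=I\cap(Y\times L_i)$, which is closed in $Y\times L_i$. The projection $Y\times L_i\to Y$ is proper because $L_i\cong\PP^1$ is complete. Hence $\pr_Y(C_i)$ is closed in $Y$, and by Step 1 it does not contain $y$. Take
\[
W=Y\setminus\bigcup_{i=1}^n\pr_Y(C_i),
\]
an open neighborhood of $y$. Then $(W\times L_i)\cap I=\varnothing$ for every $i$.

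\textbf{Main obstacle.} The only delicate point is the dimension count in Step 1. The lines through $a$ meeting $I_y$ form a family of dimension at most $\dim I_y\le n-2<n-1$. This is exactly where the hypothesis $\operatorname{codim} I_y\ge 2$ enters: a divisor could meet every line through $a$. Here $n\ge2$ ensures $\PP^{n-1}$ is nonempty, and when $n=2$ the set $I_y$ is finite, so the count still works. Step 2 is routine properness.
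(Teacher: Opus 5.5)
Your proposal is correct and is essentially the paper's argument. The image of $I_y$ under projection from $a$ is exactly the projectivized tangent cone at $a$ of the cone $C_a(I_y)$ that the paper uses to select the $n$ directions. Your Step 2, taking closed images under the proper projection $Y\times L_i\to Y$ and removing them, is the same as the paper's.
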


\begin{proof} Let $C_a=C_a(I_y)\subset\PP^n$ 
be the projective cone 
with base $I_y$ and with vertex $a$, and let 
$C_a'\subset T_a\PP^n$ be  
its tangent cone at the vertex.  
The projectivization $\PP C_a'$ is 
a closed subset of $\PP (T_a\PP^n)\simeq\PP^{n-1}$ 
of dimension at most $n-2$. 
Its complement is a nonempty 
open subset of $\PP (T_a\PP^n)$. 
Choose $n$ linearly independent 
tangent directions in this complement. 
The projective lines $L_i$ in $\PP^n$ 
through $a$, tangent to chosen directions, 
are not among the ruling lines of the cone $C_a$, 
and therefore miss $I_y$. 
The image of 
$I\cap(Y\times L_i)$ in $Y$ 
is closed because $L_i$ is projective, 
and it does not contain $y$. The complement 
of the union of these images 
is the required neighborhood $W$.
\end{proof}
The next proposition provides a criterion of ellipticity in terms 
of moving families of involutions. It generalizes the approach 
of Proposition 4.3 
and Corollary 4.4 in \cite{KZcubic}\footnote{The reference 
to Kusakabe's result in the proof of Proposition 4.3 
in \cite{KZcubic} should be replaced by the one to 
its generalization in \cite[Theorem 1.7]{AKZ}.}.

\begin{proposition}
\label{prop:criterion}
Let $X$ be a smooth complete unirational variety 
of dimension $n\ge2$. Suppose that a rational map
\[
 i:X\times X\dashrightarrow X,\qquad (p,x)\longmapsto i_p(x),
\]
is regular on an open subset $\Omega\subset X\times X$ such that
\begin{equation}\label{eq:invol}
 (p,i_p(x))\in\Omega,\qquad i_p(i_p(x))=x
 \qquad\text{for every }(p,x)\in\Omega.
\end{equation}
Assume that for every $x\in X$ the open subset
\[
 P_x=\{p\in X:(p,x)\in\Omega\}
\]
is dense and the evaluation map
\[
 e_x:P_x\longrightarrow X,\qquad p\longmapsto i_p(x)
\]
is dominant. Then $X$ is algebraically elliptic.
\end{proposition}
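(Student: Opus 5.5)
The plan is to verify the hypothesis of Lemma~\ref{lem:localization}. For every $x\in X$ we build, on a neighborhood of $x$, $n$ local rank-one sprays whose parameter derivatives span $T_xX$. Each spray is a composition of involutions from the family, with the center moved along a rational curve. Unirationality gives a dominant rational map $\varphi:\PP^n\dashrightarrow X$. Projective lines in $\PP^n$ then become complete rational curves of centers $p(t)$, and Lemma~\ref{lem:avoid} lets us choose these lines so that everything stays inside $\Omega$.

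\emph{Basic building block.} Take $y\in X$ and $p_0\in P_y$ with $e_y(p_0)=x$. By \eqref{eq:invol}, $i_{p_0}$ is regular near $x$ with $i_{p_0}(x)=y$. Let $t\mapsto p(t)$, $t\in\A^1$, be a curve with $p(0)=p_0$. Consider
\[
 s(z,t)=i_{p(t)}\bigl(i_{p_0}(z)\bigr).
\]
Then $s(z,0)=z$ by \eqref{eq:invol}, and the parameter derivative at $z=x$ equals $d(e_y)_{p_0}(p'(0))$. So if $d(e_y)_{p_0}$ is surjective, letting $p'(0)$ run over a basis of $T_{p_0}X$ gives $n$ sprays spanning $T_xX$.

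\emph{Regularity.} We need $s$ to be a morphism on $W\times\A^1$ for a neighborhood $W$ of $x$. Write $p(t)=\varphi(a+tv)$ along a projective line $L\subset\PP^n$ through a point $a$ with $\varphi(a)=p_0$. Put $Y=$ a neighborhood of $x$ on which $i_{p_0}$ is regular, and let $I\subset Y\times\PP^n$ be the closed set of pairs $(z,a')$ such that either $\varphi$ is undefined at $a'$ or $(\varphi(a'),i_{p_0}(z))\notin\Omega$. The fiber $I_x$ is the indeterminacy locus of $\varphi$ together with $\varphi^{-1}(X\setminus P_y)$; the former has codimension $\ge2$, while the latter is a divisor in general. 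To apply Lemma~\ref{lem:avoid} we must arrange $\dim I_x\le n-2$. If that fails, I would allow the spray to be defined on $W\times\A^1$ with $\A^1=L\setminus\{\text{one point}\}$ and run the avoidance argument for the codimension-$\ge2$ part only. Alternatively, I would use the freedom to choose $\varphi$ (precompose with automorphisms of $\PP^n$, or replace $P_y$ by an affine open in the spirit of \cite[Section~4]{KZcubic}) so that the bad divisor is met only at infinity. Lemma~\ref{lem:avoid} then yields lines $L_1,\dots,L_n$ with independent directions at $a$ and a common neighborhood $W$ of $x$. Since $\varphi$ can be taken étale at a general $a$, the independent directions map to a basis of $T_{p_0}X$.

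\emph{Main obstacle: surjectivity of $d(e_y)_{p_0}$ at a point $p_0$ with $e_y(p_0)=x$.} Dominance of $e_y$ only gives surjectivity over a dense set of values. Consider the locus $S\subset\Omega$ where $\partial_p i$ is not surjective. It meets each slice $\{y\}\times P_y$ in a proper closed set, but for special $x$ every $p_0$ with $i_{p_0}(y)=x$ could be critical. I would first move $x$ to a general point. Since $e_x$ is dominant, choose $q\in P_x$ so that $x'=i_q(x)$ is general. For general $x'$, a general $p_0\in P_{x'}$ gives $y=i_{p_0}(x')$ with $(p_0,y)\notin S$; this follows by dimension count, using that $(p,x')\mapsto(p,i_p(x'))$ is an automorphism of $\Omega$. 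Then conjugate the building block by $i_q$:
\[
 s(z,t)=i_q\Bigl(i_{p(t)}\bigl(i_{p_0}(i_q(z))\bigr)\Bigr).
\]
Its parameter derivative at $x$ is $d(i_q)_{x'}$ applied to a spanning family. Since $i_q$ is a local isomorphism near $x'$ by \eqref{eq:invol}, these derivatives still span $T_xX$. The regularity requirement for the outer $i_q$ along the whole curve is again a closed condition, handled by the same application of Lemma~\ref{lem:avoid} after enlarging $I$. Lemma~\ref{lem:localization} then finishes the proof.
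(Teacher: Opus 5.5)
\textbf{There is a genuine gap in your regularity step.} Your spray is, in effect, $s(z,t)=\Psi\bigl(i_{p_0}(z),\lambda(t)\bigr)$ with $\Psi(w,a')=i_{\varphi(a')}(w)$, which is the same spray the paper uses. You define the bad set $I$ by the condition $(\varphi(a'),i_{p_0}(z))\notin\Omega$, and you correctly note that its fiber contains the hypersurface $\varphi^{-1}(X\setminus P_y)$. The fixes you then offer do not work:
\begin{itemize}
\item Every line $L_i$ through $a$ meets that hypersurface, in general in several points.
\item Suppose that for $z=x$ you arrange a single intersection point $q_i$ and delete it. The bad hypersurface depends on $z$ through $i_{p_0}(z)$, so for $z\in W$ near $x$ the intersection points move off $q_i$. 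But the parametrization $\lambda_i:\A^1\to L_i\setminus\{q_i\}$ must be the same for all $z$.
\item Precomposing $\varphi$ with automorphisms of $\PP^n$ does not change this.
\end{itemize}
The missing idea is that you never need to stay inside $\Omega$ along the line. $\Psi$ is a rational map from the smooth variety $X\times\PP^n$ to the complete variety $X$. By the valuative criterion it is regular at every codimension-one point, so its indeterminacy locus $I$ has codimension at least two, and the divisor you worried about is no obstruction. Even so, $\operatorname{codim}I\ge2$ gives $\dim I_y\le n-2$ only for $y$ in a dense open $V\subset X$. So the auxiliary point $y=i_{p_0}(x)$ must be chosen in $V$, which is possible because $a\mapsto i_{\varphi(a)}(x)$ is dominant. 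With this $I$, Lemma~\ref{lem:avoid} applies exactly as you intended.

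\textbf{Your ``main obstacle'' is not one, and the conjugation by $i_q$ can be dropped.} Differentiate $i_p(i_p(x))=x$ in $p$ at $p_0$, with $y=i_{p_0}(x)$. This gives
\[
d(e_y)_{p_0}=-d(i_{p_0})_y\circ d(e_x)_{p_0},
\]
and $d(i_{p_0})_y$ is invertible because $i_{p_0}$ is an involution near $y$ by \eqref{eq:invol}. So surjectivity of $d(e_y)_{p_0}$ follows from surjectivity of $d(e_x)_{p_0}$. That holds for general $p_0\in P_x$ by generic smoothness of the dominant map $e_x$. The paper argues exactly this way: with $F_x=e_x\circ\varphi$ it gets $D_2\Psi_{(y,a)}=-di_p|_y\circ dF_x|_a$, and it chooses $a$ so that $F_x$ is \'etale at $a$ and $F_x(a)\in V$ at the same time. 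Your conjugated spray is correct at the level of derivatives. However, the extra outer $i_q$ must also be regular along the whole line, and that again requires the codimension-two extension argument above.
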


\begin{proof}
Choose a dominant rational map $\varphi:\PP^n\dashrightarrow X$. 
Consider the rational map
\begin{equation}\label{eq:Psi}
 \Psi:X\times\PP^n\dashrightarrow X,
 \qquad \Psi(x,a)=i_{\varphi(a)}(x),
\end{equation}
and let $I$ be the indeterminacy locus of $\Psi$. Since the target $X$
is proper and the source $X\times\PP^n$ is smooth, the valuative
criterion for properness shows that $\Psi$ is defined at every
codimension-one point of its source. Thus
\[
 \operatorname{codim}_{X\times\PP^n}I\ge2.
\]
Since $\dim(X\times\PP^n)=2n$, every component of $I$ 
dominating $X$ has generic fiber of dimension at most $n-2$. 
Components not dominating $X$ have proper closed image under 
the proper projection $I\to X$. Removing from $X$ those images 
and the upper-semicontinuity jumping loci of dominating components of 
$I$ gives a nonempty open subset $V\subset X$ such that
\begin{equation}\label{eq:fiberbound}
 \dim I_y\le n-2\qquad \forall y\in V.
\end{equation}

Fix an arbitrary $x\in X$. Let $D_x\subset\PP^n$ be the open subset 
where $\varphi:\PP^n\dashrightarrow X$ is regular and 
$\varphi(a)\in P_x$ for any $a\in D_x$. The morphism
\[
 F_x:D_x\longrightarrow X,
 \qquad F_x(a)=i_{\varphi(a)}(x)
\]
is dominant. Its source and target are smooth 
of the same dimension. So (in characteristic zero) $F_x$ is \'etale on 
a dense open subset. 
We may therefore choose $a\in D_x$ such that $dF_x|_a$ 
has rank $n$ and
\[
y:=F_x(a)\in V.
\]
Letting $p=\varphi(a)\in X$ we obtain $y=i_p(x)$ and $x=i_p(y)$, 
where $(p,x)$ and $(p,y)$ both belong to $\Omega$ 
by our assumptions in \eqref{eq:invol}.
Thus, for every $x\in X$ we have found a pair $(p,y)\in\Omega$
 such that $x=i_p(y)$.
 If
\[
 \Omega_p=\{z\in X:(p,z)\in\Omega\},
\]
then $i_p:\Omega_p\to\Omega_p$ is an involution, 
hence an isomorphism. In particular, $i_p$ identifies 
neighborhoods of $x$ and $y$, and $\Psi$ is regular at $(y,a)$, 
see \eqref{eq:Psi}.

For $a'$ near $a$, the involution identity gives
\[
 \Psi\bigl(F_x(a'),a'\bigr)=x.
\]
Differentiating at $a$ yields
\[
 D_1\Psi_{(y,a)}\circ dF_x|_a+D_2\Psi_{(y,a)}=0.
\]
Both $D_1\Psi_{(y,a)}=di_p|_y$ and $dF_x|_a$ have rank $n$. 
Consequently
\begin{equation}\label{eq:rank}
 D_2\Psi_{(y,a)}:T_a\PP^n\longrightarrow T_xX
 \quad\text{is an isomorphism.}
\end{equation}

We have $a\notin I_y$ and the bound \eqref{eq:fiberbound}. 
Lemma~\ref{lem:avoid} gives projective lines $L_1,\ldots,L_n$ 
through $a$ with independent tangent directions and 
a neighborhood $W$ of $y$ such that $\Psi$ is regular on 
$W\times L_i$. Choose an open neighborhood $U$ 
of $x$ inside $\Omega_p$ so that $i_p(U)\subset W$. 
For each $i$, choose an isomorphism
\[
 \lambda_i:\A^1\xrightarrow{\sim}L_i\setminus\{q_i\},
 \qquad \lambda_i(0)=a,
\]
where $q_i\ne a$. Define
\begin{equation}\label{eq:spray}
 s_i:U\times\A^1\longrightarrow X,
 \qquad s_i(z,t)=\Psi\bigl(i_p(z),\lambda_i(t)\bigr).
\end{equation}
Each $s_i$ is a morphism and
\[
 s_i(z,0)=i_p(i_p(z))=z.
\]
The parameter derivative of $s_i$ at $(x,0)$ is $D_2\Psi_{(y,a)}$ 
applied to the tangent vector of $L_i$. 
These derivatives span $T_xX$ by \eqref{eq:rank}. 
Thus, \eqref{eq:spray} provides 
$n$ rank-one sprays
defined on an open neighborhood $U$ of $x$ in $X$ 
such that their parameter derivatives at $x$ span $T_xX$. 
Since $x\in X$ is arbitrary, Lemma~\ref{lem:localization} applies 
and gives the result. 
\end{proof}

For a rational map $i:X\times X\dashrightarrow X$, let $R$ denote its
maximal open subset of regularity. In the following statements,
$i^{-1}(Z)$ means the inverse image in $R$; taking its closure will always
be indicated explicitly.

\begin{proposition}\label{prop:removal}
Let $X$ and $i$ satisfy the hypotheses of
Proposition~\ref{prop:criterion}, and let $Z\subset X$ be closed of
codimension at least two. Suppose, in addition, that
\begin{enumerate}
\item[(a)] for every $p\in X$, the open subset
$R_p=\{z\in X:(p,z)\in R\}$ is dense and the morphism
$i_p:R_p\to X$ is dominant;
\item[(b)] $\dim i^{-1}(Z)\le2n-2$.
\end{enumerate}
Then $X\setminus Z$ is algebraically elliptic.
\end{proposition}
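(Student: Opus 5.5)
The plan is to rerun the proof of Proposition~\ref{prop:criterion} on $X\setminus Z$. We use Lemma~\ref{lem:localization} for the smooth variety $X\setminus Z$. So we must produce, for each $x\in X\setminus Z$, $n$ local rank-one sprays with values in $X\setminus Z$, defined near $x$, whose parameter derivatives span $T_xX$. The construction \eqref{eq:spray} is $s_i(z,t)=\Psi(i_p(z),\lambda_i(t))$ with $\Psi(y,a)=i_{\varphi(a)}(y)$. We must arrange two things. First, the base point $y=i_p(x)$ and the neighborhood $W$ can be taken with $W\cap Z$ irrelevant; this is automatic, since $U\subset X\setminus Z$ and only the values of $s_i$ matter. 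Second, the values $\Psi(w,\lambda_i(t))$ for $w\in W$, $t\in\A^1$ avoid $Z$. So we enlarge the bad set: replace the indeterminacy locus $I$ by
\[
 I'=I\cup\overline{\Psi^{-1}(Z)}\subset X\times\PP^n ,
\]
where $\Psi^{-1}(Z)$ is taken in the regularity locus of $\Psi$. If $(W\times L_i)\cap I'=\varnothing$, then $s_i$ is regular on $U\times\A^1$ with values in $X\setminus Z$.

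The key step is the fiber bound $\dim I'_y\le n-2$ for $y$ in a dense open set $V\subset X$, which is what Lemma~\ref{lem:avoid} needs. For $I$ this was shown above. For $\overline{\Psi^{-1}(Z)}$ it suffices, as in the proof of Proposition~\ref{prop:criterion}, to show $\dim\overline{\Psi^{-1}(Z)}\le 2n-2$. Then every dominating component has generic fiber over $X$ of dimension $\le n-2$, and the nondominating components and jumping loci are removed. To bound the dimension, choose the dominant map $\varphi$ to be regular on an open set $D\subset\PP^n$. Then $\Psi^{-1}(Z)\cap(X\times D)$ is the preimage of $i^{-1}(Z)\subset R$ under $(y,a)\mapsto(\varphi(a),y)$. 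By generic flatness, shrink $D$ so that $\varphi|_D$ is smooth (characteristic zero), hence flat of relative dimension $0$. Then this preimage has dimension $\le\dim i^{-1}(Z)\le2n-2$ by (b). The part of $\Psi^{-1}(Z)$ over $\PP^n\setminus D$ lies in $X\times(\PP^n\setminus D)$, which has dimension $2n-1$, and this is the main obstacle. I would handle it by adding $X\times(\PP^n\setminus D)$ directly to the avoidance step rather than to $I'$. Its fiber over every $y$ is the fixed hypersurface $H=\PP^n\setminus D$, which has dimension $n-1$, so Lemma~\ref{lem:avoid} does not apply to it. Instead, the lines $L_i$ meet $H$ in finitely many points, and we choose the puncture so that $\lambda_i$ parametrizes only part of $L_i$. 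Concretely, we replace $L_i$ by an affine line $L_i\setminus\{q_i\}$ with $L_i\cap H\subset\{q_i\}$. This is possible if $\PP^n\setminus D$ can be taken to be a hyperplane: precompose $\varphi$ with a linear change and shrink $D$ to an affine chart $\A^n$ inside the smooth locus. This may lose nothing, since flatness only needs to hold on an open set and we can then take $D=\A^n\cap(\text{smooth locus})$. The complement, however, is then larger than a hyperplane. I expect to fix this by using (a): for each $p$, $i_p$ is dominant, so only the dimension count on $i^{-1}(Z)$ matters. The parametrization can be chosen with $\varphi$ étale on $\A^n\setminus\{\text{codim }2\}$, or we can simply accept $I'$ defined via the closure and estimate the closure's dimension directly, since the closure of a $(2n-2)$-dimensional set is still $(2n-2)$-dimensional as long as no new components appear over the boundary.

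Given the bound, the rest follows the proof of Proposition~\ref{prop:criterion}. Fix $x\in X\setminus Z$. Choose $a\in D_x$ with $dF_x|_a$ of full rank and $y=F_x(a)$ in the good set $V$, and also $a\notin I'_y$. The last condition holds for general $a$ because $\Psi(y,a)=x\notin Z$ and $\Psi$ is regular at $(y,a)$, so $(y,a)\notin\overline{\Psi^{-1}(Z)}$ unless the closure adds points there. Hypothesis (a) is used to guarantee $y$ can be taken generic: $F_x$ is dominant, so $y$ ranges over a dense set. Lemma~\ref{lem:avoid} applied to $I'$ then gives lines $L_i$ and a neighborhood $W$ of $y$. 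The sprays \eqref{eq:spray} take values in $X\setminus Z$, their parameter derivatives span $T_xX$ by \eqref{eq:rank}, and Lemma~\ref{lem:localization} concludes.
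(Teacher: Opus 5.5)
Your architecture is the paper's: enlarge the bad set to $I\cup\overline{\Psi^{-1}(Z)}$, show it has dimension at most $2n-2$, and rerun the construction of Proposition~\ref{prop:criterion} with Lemma~\ref{lem:avoid}. Your bound over the open set where $\varphi$ is \'etale is fine. You should also include the preimage of the indeterminacy locus of $i$, which has dimension at most $2n-2$, because $\Psi$ may be regular at points where $i$ is not.

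\textbf{The gap.} The step you flag as the main obstacle is left unresolved, and it is the heart of the proof. What must be excluded is a prime divisor $\Delta\subset X\times\PP^n$ contained in $\overline{\Psi^{-1}(Z)}$ that does not dominate $\PP^n$. Your tentative fixes do not do this. The complement of the \'etale (or quasi-finite) locus of $\varphi$ may contain divisors, such as ramification divisors or divisors contracted by $\varphi$. Nothing lets you arrange that it is a hyperplane or has codimension two. Puncturing the lines $L_i$ does not help either: $L_i$ meets such a divisor at points other than $q_i$, and at each of them the same question arises. Finally, ``as long as no new components appear over the boundary'' is precisely the claim to be proved. The danger is not the closure of the part over your open set, but a component of $\Psi^{-1}(Z)$ itself of the form $X\times E$, where $\Psi$ is regular but $\varphi$ is not quasi-finite.

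\textbf{The missing argument.} This is where hypothesis~(a) enters. You assign it the wrong role: dominance of $F_x$ already comes from the hypotheses of Proposition~\ref{prop:criterion}. The argument runs as follows.
\begin{enumerate}
\item Suppose $E=\pr_2(\Delta)\ne\PP^n$. Then $E$ is closed, since $X$ is complete.
\item The fibers of $\Delta\to E$ have dimension at most $n$, so $2n-1\le\dim E+n$. Hence $E$ is a prime divisor and $\Delta=X\times E$.
\item Since $X$ is complete, $\varphi$ is regular at a general point $a\in E$; put $p=\varphi(a)$.
\item For every $z\in R_p$ the map $\Psi$ is regular at $(z,a)$. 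Since $(z,a)\in\overline{\Psi^{-1}(Z)}$ and $Z$ is closed, $i_p(z)=\Psi(z,a)\in Z$.
\item So $i_p(R_p)\subset Z$, contradicting (a).
\end{enumerate}
This is why (a) is required for every $p$, including the special points to which $\varphi$ may contract a divisor.

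Your count over the \'etale locus rules out divisorial components dominating $\PP^n$. Combined with the argument above, this gives $\dim\overline{\Psi^{-1}(Z)}\le2n-2$, and the rest of your argument goes through. Note also that $(y,a)\notin\overline{\Psi^{-1}(Z)}$ is automatic, with no hedging needed: $\Psi$ is regular at $(y,a)$ with value $x\notin Z$, so a neighborhood of $(y,a)$ misses $\Psi^{-1}(Z)$.
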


\begin{proof}
Choose a dominant rational map $\varphi:\PP^n\dashrightarrow X$ and
use the map $\Psi$ of \eqref{eq:Psi}. The map $\varphi$ is generically
finite, since its source and target have the same dimension.
Let $I$ be the indeterminacy locus of $\Psi$
and set
\begin{equation}\label{eq:removedset}
 J=I\cup\overline{\Psi^{-1}(Z)}\subset X\times\PP^n,
\end{equation}
where the inverse image is taken on the regularity locus of $\Psi$.
We first prove that $\dim J\le2n-2$.

As in the proof of Proposition~\ref{prop:criterion}, $\dim I\le2n-2$.
Suppose that a prime divisor $D\subset X\times\PP^n$ is contained in
$\overline{\Psi^{-1}(Z)}$. Since $Z$ is closed, $\Psi$ takes values in
$Z$ at every point of $D$ where it is regular.

Let $\pr_2:X\times\PP^n\to\PP^n$ be the second projection.
Suppose first that $\pr_2(D)=\PP^n$. Choose a dense open subset
$A\subset\PP^n$ on which $\varphi$ is a quasi-finite morphism, and
consider
\[
 \nu_A:X\times A\longrightarrow X\times X,
 \qquad \nu_A(z,a)=(\varphi(a),z).
\]
The fiber of $\nu_A$ over $(p,z)$ consists of the points $(z,a)$ with
$\varphi(a)=p$, so it is finite. The dimension theorem therefore gives
$\dim\nu_A^{-1}(W)\le\dim W$ for every locally closed subset
$W\subset X\times X$. In particular, hypothesis~(b) gives
\[
 \dim\nu_A^{-1}\bigl(i^{-1}(Z)\bigr)
 \le\dim i^{-1}(Z)\le2n-2.
\]
Since the indeterminacy locus $S$ of $i$ 
has dimension at most $2n-2$ and $\nu_A$ is quasi-finte,
one has $\dim {\nu_A^{-1} (S)} \leq 2n-2$. 
 Thus, the generic point $\omega$ 
of $D$ is not contained in ${\nu_A^{-1} (S)}$ and,
consequently, $i$ is regular at $\nu_A(\omega)$. 
In particular, $\omega$ is contained in 
$\Psi^{-1}(Z) =\nu_A^{-1}\bigl(i^{-1}(Z)\bigr)$
contrary to the fact that $\dim D=2n-1$.

Suppose next that $D$ does not dominate $\PP^n$ under
$\pr_2$, and put $E=\pr_2(D)\subset\PP^n$. The projection $\pr_2$
is proper because $X$ is complete, so $E$ is closed. Each fiber of
$\pr_2|_D:D\to E$ is a closed subset of $X\times\{a\}$ and has
dimension at most $n$. Hence
\[
 2n-1=\dim D\le\dim E+n.
\]
Since $E$ is a proper closed subset of $\PP^n$, it follows that
$\dim E=n-1$. Thus $E$ is a divisor in $\PP^n$. The inclusion
$D\subset X\times E$ is an equality, since both varieties are
irreducible and have dimension $2n-1$.

The map $\varphi$ is regular at the generic point of $E$, because
its target $X$ is proper. Choose $a\in E$ where $\varphi$ is regular
and put $p=\varphi(a)$. By hypothesis~(a), $R_p$ is dense in $X$ and
$i_p:R_p\to X$ is dominant. For every $z\in R_p$, the composite
$\Psi$ is regular at $(z,a)$, since $\varphi$ is regular at $a$ and
$i$ is regular at $(p,z)$. Since $(z,a)\in D$ and
$D\subset\overline{\Psi^{-1}(Z)}$, we obtain
\[
 i_p(z)=\Psi(z,a)\in Z\qquad(z\in R_p).
\]
Thus $i_p(R_p)\subset Z$, contradicting the dominance required in
(a). This proves the bound $\dim J\le2n-2$.

The proper projection $J\to X$ now gives a dense open subset
$V\subset X$ such that $\dim J_y\le n-2$ for $y\in V$.
Fix $x\in X\setminus Z$. As in the proof of
Proposition~\ref{prop:criterion}, choose $a\in\PP^n$ such that
\[
 p=\varphi(a),\qquad y=F_x(a)=i_p(x)\in V,
 \qquad (p,x)\in\Omega,
\]
and $dF_x|_a$ is an isomorphism. Then $(p,y)\in\Omega$ and
$\Psi(y,a)=x$. Thus $(y,a)\notin J$: the map $\Psi$ is regular there
and its value lies outside the closed subset $Z$.

Apply Lemma~\ref{lem:avoid} to $J$. It gives projective lines
$L_1,\ldots,L_n$ through $a$ with independent tangent directions,
and a neighborhood $W$ of $y$ such that
\[
 \Psi:W\times L_i\longrightarrow X\setminus Z
\]
is a morphism for every $i$. Choose a neighborhood
$U\subset X\setminus Z$ of $x$ contained in $\Omega_p$ with
$i_p(U)\subset W$. With the parametrizations $\lambda_i$ used in
\eqref{eq:spray}, define
\[
 s_i:U\times\A^1\longrightarrow X\setminus Z,
 \qquad s_i(z,t)=\Psi\bigl(i_p(z),\lambda_i(t)\bigr).
\]
These are local rank-one sprays. The identity
$\Psi(F_x(a'),a')=x$ gives
\[
 D_2\Psi_{(y,a)}=-di_p|_y\circ dF_x|_a,
\]
so their parameter derivatives span $T_xX=T_x(X\setminus Z)$.
Since $x$ is arbitrary, Lemma~\ref{lem:localization}, applied to
$X\setminus Z$, proves the assertion.
\end{proof}

Condition~(b) of Proposition~\ref{prop:removal} concerns the
inverse image of $Z$ on the whole regularity locus $R$. We verify this
condition for cubic hypersurfaces below and for quartic double spaces
in Section~\ref{sec:double}.

\begin{lemma}\label{lem:cubicfibers}
Let $X\subset\PP^{n+1}$ be a smooth cubic hypersurface, $n\ge2$, and
let $i(p,z)$ be the third point of intersection with the line through
$p$ and $z$. Then
\[
 \dim i^{-1}(y)\le n\qquad\text{for every }y\in X.
\]
\end{lemma}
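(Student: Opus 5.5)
The plan is to bound $i^{-1}(y)\subset R$ by stratifying $R$ according to how the pair $(p,z)$ sits relative to $X$. I would use three pieces: the diagonal $\Delta\cap R$; the set $R^{\circ}$ of pairs with $p\ne z$ and the line $\overline{pz}\not\subset X$; and the set $R^{\ell}$ of pairs with $p\ne z$ and $\overline{pz}\subset X$. It suffices to show that $i^{-1}(y)$ meets each piece in a set of dimension at most $n$. For the diagonal this is automatic, since $\dim\Delta=n$.

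On $R^{\circ}$ the map is given by the classical formula. If $\ell=\overline{pz}$, then $\ell\cdot X=p+z+i(p,z)$ as divisors on $\ell$. Hence $i(p,z)=y$ forces $\ell$ to pass through $y$, and $\{p,z\}$ is the residual divisor of $y$ on $\ell$. Lines through $y$ form a $\PP^n$. For each such line not contained in $X$, there are at most two ordered pairs $(p,z)$. So the map $i^{-1}(y)\cap R^{\circ}\to\PP^n$, $(p,z)\mapsto\overline{pz}$, is quasi-finite, and the dimension is at most $n$.

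On $R^{\ell}$ the value $i(p,z)$ is only a limit of values from $R^{\circ}$, so I first need incidence. The map $(p,z)\mapsto\overline{pz}\in\Gr(2,n+2)$ is regular off the diagonal. The condition $i(p,z)\in\overline{pz}$ is closed on $R\setminus\Delta$ and holds on the dense subset $R^{\circ}$, so it holds on all of $R\setminus\Delta$. Therefore a pair in $i^{-1}(y)\cap R^{\ell}$ lies on a line $\ell\subset X$ passing through $y$. The family of lines in $X$ through a fixed point $y$ has dimension at most $n-2$. To see this, identify such lines with directions in $\PP(T_yX)\simeq\PP^{n-1}$. If they filled all of $\PP(T_yX)$, then the cone over $y$ would equal the tangent hyperplane section $T_yX\cap X$, and $X$ would contain an $n$-dimensional linear or conical piece. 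This is impossible for a smooth hypersurface: by Lefschetz a smooth hypersurface contains no subvariety of dimension $n$ other than divisors of degree divisible by $3$, and here we would have a degree-one hyperplane-section component. Hence the directions form a proper closed subset of $\PP(T_yX)$, of dimension at most $n-2$. Each such line contributes at most a $2$-dimensional family of pairs $(p,z)$. Thus $\dim\bigl(i^{-1}(y)\cap R^{\ell}\bigr)\le (n-2)+2=n$.

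I expect the main obstacle to be this last step: justifying that a smooth cubic has at most an $(n-2)$-dimensional family of lines through every point, including special points such as Eckardt-type points. The fallback argument is the following. If every tangent direction at $y$ were a line direction, then $T_yX\cap X$ would be a cone with vertex $y$ equal to the whole hyperplane section. By the same property, every point of this cone would have the hyperplane in its tangent space. This would force $X$ to be singular along the cone, contradicting smoothness. Combining the three pieces yields $\dim i^{-1}(y)\le n$.
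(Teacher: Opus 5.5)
Your decomposition (diagonal, joining line not in $X$, joining line in $X$), the closedness argument for incidence, and the counts $n$ and $(n-2)+2$ are exactly the paper's proof. The paper proves incidence first on all of $R$ and then uses only the finiteness of $\ell\cap X$ for $\ell\not\subset X$. That ordering spares you the separate claim that $i$ agrees with the residual-point formula on $R^{\circ}$.

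The one real problem is the step you flagged yourself. Neither of your justifications that the lines of $X$ through $y$ form a family of dimension at most $n-2$ is correct as written.
\begin{itemize}
\item If every direction in $\PP(T_yX)\simeq\PP^{n-1}$ were a line direction, the lines would sweep out the whole embedded tangent hyperplane $T_yX\simeq\PP^n$. They would not sweep out the $(n-1)$-dimensional section $T_yX\cap X$. The resulting inclusion $T_yX\subset X$ is absurd simply because $X$ is irreducible of degree $3$.
\item Lefschetz plays no role here, and your formulation of it is garbled: an $n$-dimensional subvariety of $X$ is $X$ itself, not a divisor. It is also unavailable for $n=2$, where the $27$ lines are degree-one divisors on the cubic surface.
\item Your fallback rests on a false inference. $T_yX\cap X$ can be a cone with vertex $y$ on a smooth cubic. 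Examples are an Eckardt point of a cubic surface (three concurrent lines), or $y=(1:-1:0:0:0)$ on the Fermat cubic threefold, where $T_yX\cap X$ is the cone over the Fermat plane cubic.
\item No singularity is forced, since the tangent hyperplane is not constant along a line of $X$. On a cubic surface, the tangent planes along a line run through the whole pencil of planes containing it. Moreover, this configuration gives an $(n-2)$-dimensional family of lines, which the bound permits, so there is nothing to exclude.
\end{itemize}

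The correct argument is the paper's one-line dimension count, uniform in $y$. A line $L\subset X$ through $y$ is its own embedded tangent line at $y$, hence $L\subset T_yX$. So the cone swept out by all such lines lies in $X\cap T_yX$, which has dimension $n-1$ because $X$ is irreducible and not a hyperplane. Each point of the cone other than $y$ lies on exactly one of these lines, so the cone has dimension one more than the family. The family therefore has dimension at most $n-2$. With this replacement your proof is complete, and Eckardt-type points need no special treatment.
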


\begin{proof}
On the dense open subset of pairs of distinct points $p,z\in X$
whose joining line meets $X$ in three distinct points, $i_p(z)$ is the
third intersection point. Thus $p,z,i_p(z)$ are collinear there.
On the regularity locus $R$, the map
\[
 R\longrightarrow(\PP^{n+1})^3,
 \qquad(p,z)\longmapsto(p,z,i_p(z))
\]
is a morphism. The locus of collinear triples is closed, and its inverse
image contains the dense open subset just described. Therefore
$p,z,i_p(z)$ are collinear for every $(p,z)\in R$.

Fix $y\in X$. The intersection of $i^{-1}(y)$ with the
diagonal $\{p=z\}$ has dimension at most $n$. It remains to estimate
the part of $i^{-1}(y)$ where $p\ne z$.

For a pair with $i_p(z)=y$, the line
$\ell=\langle p,z\rangle$ passes through $y$. Lines through $y$ form
$\PP^n$. If $\ell\not\subset X$, the choices of $p,z\in X\cap\ell$
are finite, so such pairs have dimension at most $n$.

The family of lines through $y$ contained in $X$ 
is automatically contained in 
$X\cap T_yX$ which is of dimension $n-1$. 
Thus, the dimension of the family is at most $n-2$.
For each such line $\ell$, the pairs in
$i^{-1}(y)$ whose joining line is $\ell$ form a subset of
$\ell\times\ell$, of dimension at most two. Since the family of
these lines has dimension at most $n-2$, their total contribution
has dimension at most $(n-2)+2=n$.
\end{proof}

\section{Moving involutions on quartic double spaces}\label{sec:double}

Consider a smooth quartic double $n$-fold
\[
 \pi:X\longrightarrow\PP^n,\qquad n\ge2,
\]
with a smooth quartic hypersurface $S$ in $\PP^n$ 
as the branch locus.
In this section we construct a moving family 
of birational involutions 
on $X$ which satisfies the assumptions of Proposition 
\ref{prop:criterion}. By virtue of this proposition, the existence 
of such a family ensures that $X$ is elliptic. 

Let $\sigma:X\to X$ be the deck involution. 
Denote by $T\subset\Gr(1,n)$ the open subset 
of lines in $\PP^n$ meeting $S$ in four distinct points. 
Consider the  incidence relation
\[
 \EE=\{(\ell,x)\in T\times X:\pi(x)\in\ell\}.
\]
The first projection $\EE \longrightarrow T$ 
yields a smooth proper family of irreducible curves. 
By the Riemann--Hurwitz formula, 
its fiber $E_\ell=\pi^*(\ell)$ over $\ell$ is 
a smooth curve of genus one. 
We write $H_\ell$ for the degree-two divisor class on 
$E_\ell$ pulled 
back from $\OO_\ell(1)$. For every $p\in E_\ell$,
\begin{equation}\label{eq:deck}
 p+\sigma(p)\sim H_\ell.
\end{equation}

\begin{lemma}\label{lem:reflection}
There is a rational map $i:X\times X\dashrightarrow X$ 
such that, whenever $(p,x)\in X\times X$ is such that 
$\pi(p)\ne\pi(x)$ and their joining line 
$\ell$ belongs to $T$, the point $i_p(x)$ 
is the unique point of $E_\ell$ satisfying
\begin{equation}\label{eq:reflection}
 x+i_p(x)\sim2\sigma(p).
\end{equation}
There is an open subset $\Omega\subset X\times X$ such that
\[
 (p,i_p(x))\in\Omega,\qquad i_p(i_p(x))=x
 \qquad\text{for every }(p,x)\in\Omega.
\]
\end{lemma}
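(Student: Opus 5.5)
We construct $i$ relative to the family $\EE\to T$ and then descend it to $X\times X$. Let $\EE\times_T\EE=\{(\ell,p,x):\pi(p),\pi(x)\in\ell\}$; it is smooth over $T$ with fibers $E_\ell\times E_\ell$. On each fiber, \eqref{eq:reflection} defines a morphism $E_\ell\times E_\ell\to E_\ell$. For fixed $(p,x)$ the divisor class $2\sigma(p)-x$ has degree one on a genus-one curve. By Riemann--Roch, $h^0(E_\ell,\OO(2\sigma(p)-x))=1$, so the class contains a unique effective divisor, a single point. This gives $i_p(x)$ and shows it is unique.

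To make this a morphism in families, let $\Delta\subset\EE\times_T\EE\times_T\EE$ be the relative diagonal in the last factor. Consider the line bundle $\mathcal L=\OO(2\Sigma_1-\Sigma_2)$ on the pulled-back family $\EE\times_T\EE\times_T\EE\to\EE\times_T\EE$, where $\Sigma_1$ is the graph of $\sigma\circ\pr_1$ and $\Sigma_2$ the graph of $\pr_2$. The pushforward of $\mathcal L$ is invertible by cohomology and base change, since $h^0=1$ and $h^1=0$ on every fiber. Its unique section vanishes on a divisor that is finite of degree one over the base, hence is a section of the family. This yields a morphism
\[
\iota:\EE\times_T\EE\to\EE,\qquad (\ell,p,x)\mapsto(\ell,i_p(x)).
\]
Alternatively, one may write $\iota$ via the group law on the Jacobian family $\operatorname{Pic}^0$, identified with $\EE$ after choosing a local origin. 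I would use the Riemann--Roch/base-change argument since it needs no choice of origin.

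Next, descend $\iota$ to $X\times X$. The map $\EE\times_T\EE\supset\{\pi(p)\ne\pi(x)\}\to X\times X$, $(\ell,p,x)\mapsto(p,x)$, is an open immersion onto
\[
\Omega_0=\{(p,x):\pi(p)\ne\pi(x),\ \langle\pi(p),\pi(x)\rangle\in T\}.
\]
Indeed, $\ell$ is recovered as the joining line, which depends regularly on $(p,x)$ through the Plücker map. So $i:=\pr_X\circ\iota$ is a morphism on $\Omega_0$, which is open and dense in $X\times X$. This defines the rational map with the required property \eqref{eq:reflection}.

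For the involution property, set
\[
\Omega=\{(p,x)\in\Omega_0:\ \pi(i_p(x))\ne\pi(p)\}.
\]
This is open. It is nonempty, since for a fixed $\ell$ and $p$ the map $x\mapsto 2\sigma(p)-x$ on $E_\ell$ is a nonconstant involution, so generic $x$ avoids the two points over $\pi(p)$. For $(p,x)\in\Omega$, the point $x'=i_p(x)$ lies on $E_\ell$, and $\pi(x')\ne\pi(p)$. The joining line of $\pi(p)$ and $\pi(x')$ is again $\ell\in T$, so $(p,x')\in\Omega_0$. Then $i_p(x')$ is the unique point with $x'+i_p(x')\sim2\sigma(p)$, and $x$ satisfies this, so $i_p(i_p(x))=x$. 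It remains to check $\pi(i_p(x'))=\pi(x)\ne\pi(p)$, which holds by the definition of $\Omega_0$; hence $(p,x')\in\Omega$.

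The main technical point is the relative construction, namely verifying that $i$ is regular rather than merely a pointwise-defined map. I would handle this by the base-change argument above, using that each fiber degree-one line bundle has exactly one section, unique up to scalar.
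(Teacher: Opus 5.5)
Your proof is correct and follows the same outline as the paper's. Both use pointwise uniqueness of the effective representative of a degree-one class, a relative construction over $\EE\times_T\EE$, descent to pairs with $\pi(p)\ne\pi(x)$ and joining line in $T$, and the same $\Omega$ with the same involution check.

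The difference is in how you prove regularity. The paper views $\pr_1\colon\EE\times_T\EE\to\EE$ as an elliptic scheme whose zero section is the diagonal. That is a global section, so no local choice of origin is needed, contrary to your aside about the group-law alternative. The paper then writes $x\mapsto 2\sigma(p)-x$ as inversion followed by translation by twice the section $(\ell,p)\mapsto(\ell,\sigma(p))$, and regularity follows because the group law of an elliptic scheme is a morphism. You instead realize the relative degree-one bundle $\OO(2\Sigma_1-\Sigma_2)$ by its zero divisor, using cohomology and base change.

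Each route buys something:
\begin{itemize}
\item Yours is origin-free and self-contained; it is essentially the argument that makes the group law algebraic in the first place. It also applies verbatim to other degree-one classes, such as $3p-H_\ell$ in Lemma~\ref{lem:unirational}.
\item The paper's gives an explicit group-law formula, which it reuses with origin $x$ in Lemma~\ref{lem:evaluation}.
\end{itemize}

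One precision point. The pushforward of $\mathcal L$ is a line bundle but not necessarily trivial, so ``its unique section'' exists only locally on the base. Globally you should use the evaluation map from the pullback of the pushforward to $\mathcal L$. The zero scheme does not depend on this choice, and it is a relative effective Cartier divisor because the section is nonzero on every integral fiber. It is therefore finite flat of degree one over the base, hence a section.

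Your explicit check that $\Omega$ is nonempty is a small addition that the paper leaves implicit.
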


\begin{proof}
A degree-one divisor class on a smooth
\revisionb{curve of genus one} has a unique effective representative, 
so \eqref{eq:reflection} determines a unique point $i_p(x)$. 
To see that it varies algebraically, 
consider the morphism
\begin{equation}\label{eq:scheme}
 {\rm pr_1}\colon\EE\times_T\EE\longrightarrow\EE.
\end{equation}
The fiber of ${\rm pr_1}$ over a point $(\ell,p)\in\EE$
is the genus-one curve $E_\ell$. Since $\EE\to T$ is a smooth
proper family of genus-one curves, so is \eqref{eq:scheme}.
The latter family admits two sections
\[(\ell, p)\mapsto\left((\ell,p), (\ell,p)\right)\quad\text{and}
\quad (\ell, p)\mapsto\left((\ell, p), (\ell, \sigma(p))\right).\]
Taking the first section as origin makes
\eqref{eq:scheme} an elliptic scheme over $\EE$; the origin on the fiber
over $(\ell,p)$ is $p$. The group law on this fiber is defined by the
Abel--Jacobi isomorphism
\[
 \alpha_p:E_\ell\xrightarrow{\sim}\Pic^0(E_\ell),
 \qquad u\longmapsto[u-p],
\]
where $[D]$ denotes the linear equivalence class of a divisor $D$.
Subtracting $2p$ from both sides of \eqref{eq:reflection} gives
\[
 [x-p]+[i_p(x)-p]=2[\sigma(p)-p]
 \qquad\text{in }\Pic^0(E_\ell).
\]
All three divisor classes have degree zero. Transporting this equality
through $\alpha_p$ gives
\[
 x\longmapsto i_p(x)=2\sigma(p)-x,
\]
where doubling and subtraction are operations in the group law with
origin $p$. Thus the displayed equality is an equality of points of
$E_\ell$. In the elliptic scheme \eqref{eq:scheme}, this operation is
translation by twice the second section, composed with inversion.
Both operations are morphisms over $\EE$. This proves that $i_p(x)$
depends regularly on the line $\ell$, the center $p$, and the moving
point $x$.

Let
\[
 U_0=\{(p,x)\in X\times X:\pi(p)\ne\pi(x),\quad
       \ell=\langle\pi(p),\pi(x)\rangle\in T\}.
\]
The assignment $(p,x)\mapsto((\ell,p),(\ell,x))$ defines a morphism
$U_0\to\EE\times_T\EE$. 
Applying the reflection in each fiber sends
$((\ell,p),(\ell,x))$ to $((\ell,p),(\ell,i_p(x)))$.
The natural map $\EE\to X$, $(\ell,y)\mapsto y$, applied to the second
factor then gives the morphism $i:U_0\to X$, $(p,x)\mapsto i_p(x)$,
satisfying \eqref{eq:reflection}.

Set
\begin{equation}\label{eq:omega}
 \Omega=\{(p,x)\in U_0:\pi(i_p(x))\ne\pi(p)\}.
\end{equation}
For a pair in $\Omega$, the joining line of $\pi(p)$ 
and $\pi(i_p(x))$ is still $\ell$. Reflection on $E_\ell$ 
gives $i_p(i_p(x))=x$. Since $\pi(x)\ne\pi(p)$, 
the pair $(p,i_p(x))$ again lies in $\Omega$. Thus the map
$(p,x)\mapsto(p,i_p(x))$ is a morphism of $\Omega$ 
whose square is the identity.
\end{proof}

The next lemma is the special geometric input 
that makes Proposition~\ref{prop:criterion} 
work for quartic double spaces.

\begin{lemma}\label{lem:evaluation}
For every $x\in X$, the set
\[
 P_x=\{p\in X:(p,x)\in\Omega\}
\]
is dense open, and
\[
 e_x:P_x\longrightarrow X,\qquad p\longmapsto i_p(x)
\]
is dominant of degree four.
\end{lemma}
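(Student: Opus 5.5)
The plan is to work entirely inside the elliptic curves $E_\ell$ through the fixed point $x$. On each such curve the conditions defining $\Omega$ fail at only finitely many points, and the equation $i_p(x)=y$ becomes a multiplication-by-$2$ problem in $\Pic(E_\ell)$.

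\emph{Openness.} By \eqref{eq:omega}, $\Omega$ is open in $U_0$. Indeed, $U_0$ is open in $X\times X$, since $T$ is open in $\Gr(1,n)$ and the joining line depends regularly on $(\pi(p),\pi(x))$ off the diagonal. Moreover, $(p,x)\mapsto\pi(i_p(x))$ is a morphism on $U_0$. Hence $P_x$, the fiber of $\Omega$ over $x$, is open in $X$.

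\emph{Density.} Fix $x\in X$ and put $q=\pi(x)$. I first check that lines $\ell\ni q$ lying in $T$ form a nonempty open subset of the $\PP^{n-1}$ of lines through $q$.
\begin{itemize}
\item If $q\notin S$, this is the standard fact that the projection of the smooth hypersurface $S$ from $q$ has a proper branch locus.
\item If $q\in S$, we exclude the tangent hyperplane $T_qS$ together with the branch locus of the projection of $S\setminus\{q\}$ from $q$. A general line through $q$ then meets $S$ transversally at $q$ and at three further distinct points.
\end{itemize}
For such an $\ell$, a point $p\in E_\ell$ fails to lie in $P_x$ only in the following cases:
\begin{itemize}
\item $\pi(p)=q$, which gives at most two points;
\item $i_p(x)=\sigma(p)$, which by \eqref{eq:reflection} and \eqref{eq:deck} means $x\sim\sigma(p)$, i.e.\ $p=\sigma(x)$;
\item $i_p(x)=p$, which by \eqref{eq:reflection} and \eqref{eq:deck} means $x+p\sim 2H_\ell-2p$, i.e.\ $3p\sim 2H_\ell-x$, giving nine points.
\end{itemize}
Thus $P_x\cap E_\ell$ is cofinite in $E_\ell$. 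Since the curves $E_\ell$, $\ell\ni q$, $\ell\in T$, sweep out a dense subset of $X$, the open set $P_x$ is nonempty, hence dense.

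\emph{Degree.} Take $y$ in a suitable dense open subset of $X$, so that $\pi(y)\ne q$ and $\ell=\langle q,\pi(y)\rangle\in T$. Any $p$ with $i_p(x)=y$ must satisfy $\pi(p)\in\ell$, since $i_p(x)\in E_{\langle\pi(p),q\rangle}$. The defining relation $x+y\sim2\sigma(p)\sim 2H_\ell-2p$ then reads
\[
 2p\sim 2H_\ell-x-y .
\]
After fixing an origin, this says $[2]p$ equals a prescribed point of $E_\ell$. Since multiplication by $2$ is étale of degree $4$ in characteristic zero, there are exactly four reduced solutions $p$.

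It remains to show that for general $y$ all four solutions lie in $P_x$, i.e.\ avoid the finitely many bad points of $E_\ell$ listed above. As $y$ varies along a fixed $E_\ell$ the right-hand side $2H_\ell-x-y$ varies, so for all but finitely many $y$ on each curve none of the four solutions is bad. Conversely, each of the four solutions $p$ indeed satisfies $i_p(x)=y$, because \eqref{eq:reflection} determines $i_p(x)$ uniquely. Hence the general fiber of $e_x$ consists of exactly four reduced points. This gives dominance and, in characteristic zero, degree four.

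The step needing most care is the case $\pi(x)\in S$, where $x$ lies on the ramification divisor. There I must verify that a general line through $\pi(x)$ still belongs to $T$, and that the bad-point analysis is uniform in $y$. The first point is the transversality argument via $T_qS$ and projection from $q$ given above. For the second, I would argue fiberwise over the $(n-1)$-dimensional family of lines through $q$ and use that the bad locus has codimension one in the $n$-dimensional total space.
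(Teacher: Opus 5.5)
Your proof is correct and follows essentially the paper's route. You restrict to the genus-one curves $E_\ell$ over lines $\ell\in\mathcal T_q$ through $q=\pi(x)$, identify $e_x$ fiberwise with $p\mapsto 2H_\ell-x-2p$ (multiplication by $-2$ followed by a translation, of degree four in characteristic zero), and discard the same finite bad set $\pi(p)=q$, $p=\sigma(x)$, $3p\sim 2H_\ell-x$. The only difference is packaging: the paper writes $e_x=r\circ m_x\circ r^{-1}$ with $m_x$ a finite \'etale endomorphism of the elliptic scheme $\EE_q$ and $r$ birational, whereas you count the points of a general fiber directly. Your concern about uniformity in $y$ does no harm, since the set of good $y$ is already dense, and that suffices in characteristic zero.
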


\begin{proof}
Set $q=\pi(x)$ and let $\mathcal T_q\subset T$ 
be the subset of lines passing through $q$ 
and meeting the branch divisor $S\subset \PP^n$ i
n four distinct points. The set $\mathcal T_q$ is a nonempty 
open subset of the projective space of lines through $q$. 
Indeed, if $q\notin S$, projection from $q$ is generically 
finite of degree four on $S$, so a general line through $q$ 
meets $S$ in four simple points. If $q\in S$, projection from 
$q$ is generically finite of degree three on $S\setminus\{q\}$; 
a general line through $q$ has simple intersections away from $q$, 
and omitting directions in the tangent hyperplane $T_qS$ makes 
the intersection at $q$ simple as well. 

We restrict the incidence family to $\mathcal T_q$ letting
\[
 \EE_q=\{(\ell,z):\ell\in\mathcal T_q,\ \pi(z)\in\ell\}
 \longrightarrow\mathcal T_q.
\]
The point $x$ gives a section of this smooth proper family
of genus-one curves. For the evaluation calculation we take this fixed
section as origin: on every fiber the zero is $x$, while the center $p$
varies. This differs from the choice of origin $p$ in the proof of
Lemma~\ref{lem:reflection}. We now use the identification
\[
 \alpha_x:E_\ell\xrightarrow{\sim}\Pic^0(E_\ell),
 \qquad u\longmapsto[u-x].
\]
Put $h_\ell=\sigma(x)\in E_\ell$. By \eqref{eq:deck} at $x$,
\[
 \alpha_x(h_\ell)=[\sigma(x)-x]=[H_\ell-2x].
\]
Thus $h_\ell$ is a point of $E_\ell$; its image under $\alpha_x$ is
the degree-zero divisor class $[H_\ell-2x]$.
Subtracting $2x$ from \eqref{eq:deck} at $p$ gives
\[
 [p-x]+[\sigma(p)-x]=[H_\ell-2x]=\alpha_x(h_\ell).
\]
Hence, in the group law with origin $x$,
\[
 \sigma(p)=h_\ell-p.
\]
Likewise, the divisor relation $x+e_x(p)\sim2\sigma(p)$ from
\eqref{eq:reflection} gives
\[
 [e_x(p)-x]=2[\sigma(p)-x],
 \qquad\text{that is,}\qquad
 \alpha_x(e_x(p))=2\alpha_x(\sigma(p)).
\]
Transporting this equality to $E_\ell$, we obtain
\begin{equation}\label{eq:minus2}
 e_x(p)=2\sigma(p)=2h_\ell-2p.
\end{equation}
Here and in the rest of this proof, sums and integer multiples of
points refer to the group law with origin $x$.

The points $h_\ell=\sigma(x)$ form the section
$h:\ell\mapsto(\ell,\sigma(x))$ of $\EE_q\to\mathcal T_q$.
Consequently the morphism
\[
 m_x:\EE_q\longrightarrow\EE_q,
 \qquad (\ell,p)\longmapsto(\ell,2h_\ell-2p),
\]
is translation by the section $2h$ composed with multiplication by $-2$
on this elliptic scheme. Multiplication by $-2$ is finite \'etale of
degree four in characteristic zero, and translation is an automorphism.
Thus $m_x$ is finite \'etale of degree four.

The morphism $r:\EE_q\to X$ forgetting $\ell$ is birational. Indeed, 
away from the point or points over $q$, a point $x'\in X$ determines 
the unique line joining $q$ to $\pi(x')$. On a dense open set, therefore,
\[
 e_x=r\circ m_x\circ r^{-1}.
\]
Thus $e_x$ extends to a dominant rational self-map of $X$ of degree four.

It remains to check that its restriction to the domain $P_x$ is still dominant. 
On a fiber $E_\ell$, the excluded centers are the points above $q$, 
together with the points $p$ satisfying
\[
 \pi(e_x(p))=\pi(p),
\]
which is equivalent to $e_x(p)=p$ or $e_x(p)=\sigma(p)$. 
By \eqref{eq:minus2}, these equations are respectively
\[
 3p=2h_\ell\qquad\text{or}\qquad p=h_\ell.
\]
They have respectively nine and one solutions on 
a smooth elliptic fiber $E_\ell$. Hence the forbidden 
locus is a proper closed subset of $\EE_q$, and its 
complement has dense birational image in $X$ contained in $P_x$. 
Therefore $P_x$ is dense, and evaluation on $P_x$ 
remains dominant of degree four.
\end{proof}

\begin{remark}\label{rem:ramification}
If $x$ lies on the ramification divisor, then $\sigma(x)=x$.
Thus $h_\ell=x$, the zero of the group law used in
Lemma~\ref{lem:evaluation}, for every $\ell\in\mathcal T_q$.
Formula~\eqref{eq:minus2} reduces to
\[
 p\longmapsto[-2]p.
\]
Thus ramification points are not exceptional for the dominance argument.
\end{remark}

For the reader's convenience, we provide the proof of 
unirationality of $X$ following Beauville \cite[Proposition~5]{Bea}. 
That paper was withdrawn because of an error in its 
stable-irrationality argument; the unirationality construction 
below is independent of it.

\begin{lemma}\label{lem:unirational}
Every smooth quartic double $n$-fold, $n\ge2$, is unirational.
\end{lemma}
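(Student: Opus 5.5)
We follow Beauville's construction and build a rational multisection via a conic bundle. The idea is to find a rational variety $B$ and a dominant map to $X$ coming from a family of rational curves, or equivalently to show that $X$ becomes rational after a base change by a rational variety.

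The concrete plan is as follows. Fix a general point $o\in\PP^n$ and consider the family of lines $\ell$ through $o$, parametrized by $\PP^{n-1}$. Over each such $\ell$ the curve $E_\ell=\pi^{-1}(\ell)$ has genus one; blowing up $o$, $X$ becomes birational to the total space of this genus-one fibration over $\PP^{n-1}$. Take the generic fiber $E$, a genus-one curve over $K=k(\PP^{n-1})$. The pair of points $\pi^{-1}(o)$ is a degree-two $K$-point, and it is not split when $o\notin S$.

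The first step is to pass to a base change that gives a $K'$-rational point. We take the base change along $X'\to\PP^{n-1}$, where $X'$ is the double cover $w^2=F(o)$ restricted to $o$. This is trivial, since $F(o)$ is constant: over $k$ algebraically closed, $\pi^{-1}(o)$ consists of two $k$-points $o_\pm$. Each $o_\pm$ then gives a section of the fibration $\EE_o\to\PP^{n-1}$, so the generic fiber is an elliptic curve over $K$, with origin $o_+$ say.

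The second step supplies a rational curve. Each elliptic fiber already has a rational point, but we need a rational variety dominating $X$. Following Beauville, we use instead the family of conics or lines contained in $X$. Concretely, $X$ contains the surfaces $\pi^{-1}(\Pi)$ for planes $\Pi\subset\PP^n$, which are del Pezzo surfaces of degree 2. By Manin, such a surface with a rational point over its field of definition is unirational of degree $\le 2$, or even rational given a suitable point.

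So we take the family of planes $\Pi$ through a fixed general line $m\subset\PP^n$, parametrized by $\PP^{n-2}$. The generic member $\pi^{-1}(\Pi)$ is a degree-2 del Pezzo surface over $K=k(\PP^{n-2})$. It contains the genus-one curve $\pi^{-1}(m)$ defined over $k$, hence many $K$-points off the exceptional curves and the ramification. By the unirationality of degree-2 del Pezzo surfaces with a general rational point (Manin; the relevant condition is that the point lies neither on the 56 lines nor on the ramification quartic), the generic fiber is $K$-unirational. Hence $X$ is dominated by a variety birational to $\PP^{n-2}\times\PP^2$.

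The main obstacle is justifying the $K$-unirationality of the degree-2 del Pezzo generic fiber from a $K$-point. The approach we would try first is Manin's argument: the elliptic curve $C$ cut by the tangent hyperplane at a general point $P$ gives, by varying, a rational-curve construction. Alternatively, one can use Beauville's explicit substitution: write $F$ restricted to a parameterization and solve $w^2=F$ along conics tangent to the branch locus. For $n=2$ the statement is just the classical rationality of degree-2 del Pezzo surfaces over $k$ algebraically closed.
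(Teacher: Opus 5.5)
Your argument is correct in substance, but it follows a different route from the paper. You fiber $X$ birationally over $\PP^{n-2}$ by the surfaces $\pi^{-1}(\Pi)$, $\Pi\supset m$. This reduces everything to an arithmetic statement: the generic fiber $X_K$, a degree-two del Pezzo surface over the non-closed field $K=k(\PP^{n-2})$, is $K$-unirational. You import that from Manin's theorem, using the $K$-points supplied by the constant curve $C=\pi^{-1}(m)$.

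The paper never leaves the algebraically closed field. It takes a single general plane $P$ and uses only that $Y=\pi^{-1}(P)$ is rational over $k$. It then maps the $\PP^{n-1}$-bundle of lines through points of $Y$ to $X$ by $(\ell,p)\mapsto f(\ell,p)$, where $f(\ell,p)\sim 3p-H_\ell$ on $E_\ell$, and proves dominance by a dimension count on the fibers of $f\colon\EE\to X$.

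So the paper's proof is self-contained and elementary, at the price of an $(n+1)$-dimensional rational source. Yours is a cleaner structural reduction with an $n$-dimensional source, but it rests on a nontrivial external theorem with a delicate general-position hypothesis. Cite that theorem precisely (Manin, \emph{Cubic Forms}, Thm.~29.4; Salgado, Testa and V\'arilly-Alvarado later sharpened this hypothesis). The sketches in your last paragraph are not proofs and should not stand in for the citation.

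Some cleanup is needed.
\begin{itemize}
\item Drop the discussion of the genus-one fibration over lines through $o$ and the ``base change''. It plays no role, and the claim that $\pi^{-1}(o)$ is not split contradicts $k=\bar k$, as you then note yourself.
\item Remove the aside that such a surface is ``even rational given a suitable point''; it is false here. $X_K$ cannot be $K$-rational when $X$ is irrational, e.g.\ for very general quartic double solids. Also, Manin's theorem needs a point in general position, not just any rational point.
\item Justify the existence of such a point; one line suffices. $C_K$ lies in $|-K_{X_K}|$, so $C_{\bar K}$ meets each of the $56$ exceptional curves in one point and meets the ramification curve in the four points over $m\cap S$. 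Hence all but finitely many of the infinitely many $k$-points of $C$ qualify.
\item Check smoothness of the generic fiber: it follows from Bertini for planes through $m$. The base points $m\cap S$ are harmless because $m$ is transverse to $S$, so no plane through $m$ lies in $T_sS$.
\end{itemize}
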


\begin{proof}
For $n=2$, $X$ is a del Pezzo surface of degree two, 
hence rational over $k$. Suppose $n\ge3$. 
On the incidence family $\EE$, define $f(\ell,p)\in E_\ell$ 
to be the unique point satisfying
\[
 f(\ell,p)\sim3p-H_\ell.
\]
As in Lemma~\ref{lem:reflection}, the map $f:\EE\to X$ is regular. 
Fiberwise it is multiplication by $3$ followed by translation, 
so $f:\EE\to X$ is dominant with general fiber of dimension $n-1$.

Let us restrict $f$ to the dense open subset where 
$\pi(f(\ell,p))\ne\pi(p)$. On a fiber over $x'\in X$, 
forgetting $\ell$ is an isomorphism onto 
a locally closed subset of $X$: its inverse assigns 
the line joining $\pi(p)$ and $\pi(x')$. For general $x'$, 
the closure of this subset has dimension $n-1$, and its 
boundary has dimension at most $n-2$. 
For a general plane $P\subset\PP^n$, finiteness of $\pi$ 
implies that $\pi^{-1}(P)$ cuts the closure in dimension 
one and the boundary in dimension at most zero. 
Thus the intersection meets the original fiber. 
The incidence correspondence of triples $(P,\ell,p)$ with $\pi(p)\in P$ 
in this open subset maps dominantly to $\Gr(2,n)\times X$ by 
$(P,\ell,p)\mapsto(P,f(\ell,p))$. Its constructible image contains 
a dense open subset, whose fibers over general $P$ are dense in $X$. 
Hence, for general $P$, the restriction of $f$ 
to centers in $Y=\pi^{-1}(P)$ is dominant.

By Bertini, $Y$ is a smooth degree-two del Pezzo surface, 
hence rational. The incidence variety of lines through points 
of $Y$ is a $\PP^{n-1}$-bundle over $Y$, hence rational. 
Its open subset with transverse lines therefore dominates $X$.
\end{proof}

\begin{proof}[Proof of Theorem~\ref{thm:main}]
The variety $X$ is smooth and projective, and 
Lemma~\ref{lem:unirational} proves its unirationality. 
Lemmas~\ref{lem:reflection} and~\ref{lem:evaluation} 
verify the remaining hypotheses of Proposition~\ref{prop:criterion}. 
The conclusion follows.
\end{proof}

\begin{lemma}\label{lem:doublefibers}
For the reflection map $i:X\times X\dashrightarrow X$ of
Lemma~\ref{lem:reflection},
\[
 \dim i^{-1}(y)\le n\qquad\text{for every }y\in X.
\]
\end{lemma}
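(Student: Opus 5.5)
We follow the strategy of Lemma~\ref{lem:cubicfibers}: first find a closed incidence condition that holds on the whole regularity locus $R$ of $i$, then count dimensions fiberwise over the lines through $\pi(y)$.

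\emph{Step 1: the closed condition.} On the dense open set $U_0$ of Lemma~\ref{lem:reflection}, the points $\pi(p)$, $\pi(z)$, $\pi(i_p(z))$ lie on the line $\ell$. Consider the morphism
\[
 R\longrightarrow(\PP^n)^3,\qquad (p,z)\longmapsto\bigl(\pi(p),\pi(z),\pi(i_p(z))\bigr).
\]
The locus of collinear triples is closed, and its inverse image contains the dense open set $U_0$. Hence, as in Lemma~\ref{lem:cubicfibers}, the three points $\pi(p),\pi(z),\pi(i_p(z))$ are collinear for every $(p,z)\in R$. Here collinear includes degenerate cases, such as coincident points, where the triple spans at most a line.

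\emph{Step 2: splitting $i^{-1}(y)$.} Fix $y\in X$ and set $q=\pi(y)$. We decompose $i^{-1}(y)$ into three parts.
\begin{itemize}
\item[(i)] Pairs with $\pi(p)=\pi(z)$. These lie in $X\times_{\PP^n}X$, which has dimension $n$, so this part has dimension at most $n$.
\item[(ii)] Pairs with $\pi(p)\ne\pi(z)$ whose joining line $\ell$ passes through $q$. Lines through $q$ form a $\PP^{n-1}$. For a fixed $\ell$, the pairs $(p,z)$ lie in $\pi^{-1}(\ell)\times\pi^{-1}(\ell)$, which has dimension $2$. This naive count gives only $n+1$, which is too much.
\item[(iii)] By Step~1, no other pairs occur.
\end{itemize}

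\emph{Step 3: sharpening the count in (ii).} We must show that for each line $\ell\ni q$, the set
\[
 \{(p,z)\in R:\ \pi(p),\pi(z)\in\ell,\ i_p(z)=y\}
\]
has dimension at most $1$. Consider the restriction $i|_{R\cap(\pi^{-1}(\ell)\times\pi^{-1}(\ell))}$. It suffices to show that this map is not constant with value $y$ on any $2$-dimensional component of $\pi^{-1}(\ell)\times\pi^{-1}(\ell)$.

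\emph{Case $\ell\in T$.} Here $E_\ell$ is a smooth elliptic curve and $i$ is given by $z\mapsto 2\sigma(p)-z$. By Lemma~\ref{lem:reflection}, $i$ is then determined on the dense open part of $E_\ell\times E_\ell$ lying in $U_0$. For each fixed $p$ the map $z\mapsto i_p(z)$ is injective, so its fiber over $y$ is finite. I expect the closure argument to give the same conclusion on the rest of $R\cap(E_\ell\times E_\ell)$, since a $2$-dimensional component of $i^{-1}(y)$ inside $E_\ell\times E_\ell$ would have to contain a dense open subset of $E_\ell\times E_\ell$, where $i$ is the reflection.

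\emph{Case $\ell\notin T$ (tangent lines).} Here $\pi^{-1}(\ell)$ may be singular or reducible, and we cannot use the group law directly. This is the main obstacle. To handle it, I would argue by a dimension estimate rather than by fiberwise control. Let $B\subset\Gr(1,n)\setminus T$ be the family of lines through $q$ that are tangent to $S$; it has dimension at most $n-2$. Over $B$, the part of $i^{-1}(y)$ has dimension at most
\[
 (n-2)+2=n,
\]
by the trivial bound of $2$ for the fiber over each line. This is the same trick as for the lines contained in $X$ in Lemma~\ref{lem:cubicfibers}.

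\emph{Conclusion.} Over the generic lines in $T$ through $q$, Step~3 gives a contribution of at most $(n-1)+1=n$. Over the tangent lines in $B$, the contribution is at most $(n-2)+2=n$. Together with part~(i), this gives $\dim i^{-1}(y)\le n$ for every $y\in X$.
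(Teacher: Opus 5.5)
Your proposal is correct and follows the paper's proof essentially step for step. The shared steps are the collinearity argument on $R$, the $X\times_{\PP^n}X$ part, the $(n-1)+1$ count over transverse lines through $q$, and the $(n-2)+2$ count over the remaining lines.

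Two small points. First, your hedged closure argument in the transverse case is valid but unnecessary: any pair with $\pi(p)\ne\pi(z)$ whose joining line lies in $T$ belongs to $U_0$ by definition, so the reflection formula applies directly. Second, the bound of dimension $n-2$ for the non-transverse lines through $q$ should be justified by citing the proof of Lemma~\ref{lem:evaluation}, which shows that $\mathcal T_q$ is nonempty even when $q\in S$.
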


\begin{proof}
Fix $y\in X$ and put $q=\pi(y)$. The locus $\pi(p)=\pi(z)$ is
$X\times_{\PP^n}X$, of dimension $n$ because $\pi$ is finite.
It remains to estimate the part of $i^{-1}(y)$ where
$\pi(p)\ne\pi(z)$. Such a pair determines the line
$\ell=\langle\pi(p),\pi(z)\rangle$.

On the open subset $U_0$ of Lemma~\ref{lem:reflection},
the points $p,z,i_p(z)$ belong to the same curve $E_\ell$, so their
images under $\pi$ are collinear. On the regularity locus $R$, the map
\[
 R\longrightarrow(\PP^n)^3,
 \qquad(p,z)\longmapsto\bigl(\pi(p),\pi(z),\pi(i_p(z))\bigr)
\]
is a morphism. Its inverse image of the closed locus of collinear
triples contains the dense open subset $U_0$, and therefore equals
$R$. Consequently, if $i_p(z)=y$ and $\pi(p)\ne\pi(z)$, their joining
line contains $q=\pi(y)$. Lines through $q$ form $\PP^{n-1}$.

Recall that $T\subset\Gr(1,n)$ is the open subset of lines
meeting the branch hypersurface $S$ in four distinct points. We call
these lines transverse to $S$. For a transverse line $\ell\in T$
through $q$, the pair $(p,z)$ lies in the open subset $U_0$ of
Lemma~\ref{lem:reflection}, and \eqref{eq:reflection} gives
\[
 z+y\sim2\sigma(p)\qquad\text{on }E_\ell.
\]
For each $p\in E_\ell$, there is a unique $z\in E_\ell$ satisfying
this relation. Thus the pairs over a fixed transverse line have
dimension at most one, and varying the line gives dimension at
most $(n-1)+1=n$.

By the proof of Lemma~\ref{lem:evaluation}, the transverse lines
through $q$ form a nonempty open subset $\mathcal T_q$ of
$\PP^{n-1}$, including when $q\in S$. Its complement has dimension
at most $n-2$. For each line in this complement, the pairs
under consideration form a subset of
$\pi^{-1}(\ell)\times\pi^{-1}(\ell)$, of dimension at most two.
These pairs therefore have dimension at most $(n-2)+2=n$ as well.
This estimate includes lines contained in $S$ and does not require
a reflection formula on singular fibers.
\end{proof}

\begin{proof}[Proof of Theorem~\ref{thm:removal}]
Suppose first that $X$ is a smooth quartic double space. The
hypotheses of Proposition~\ref{prop:criterion} have been verified in
the proof of Theorem~\ref{thm:main}.

Recall that $R$ is the regularity locus of $i$, and that
\[
 \Omega_p=\{z\in X:(p,z)\in\Omega\},\qquad
 R_p=\{z\in X:(p,z)\in R\}.
\]
Fix $p\in X$ and a transverse line $\ell$ through $\pi(p)$. Let
$r_{p,\ell}:E_\ell\to E_\ell$ be the reflection of
Lemma~\ref{lem:reflection}. By the definition of $\Omega$, a point
$z\in E_\ell$ belongs to $\Omega_p$ precisely when
\[
 \pi(z)\ne\pi(p),\qquad
 \pi(r_{p,\ell}(z))\ne\pi(p).
\]
The excluded points $z\in E_\ell$ are therefore exactly those in
\[
 \{p,\sigma(p)\}\ \cup\
 r_{p,\ell}^{-1}\bigl(\{p,\sigma(p)\}\bigr).
\]
This set is finite, since $r_{p,\ell}$ is an automorphism of $E_\ell$.
Thus $\Omega_p$ meets each such curve in a dense open subset.
The curves above transverse lines through $\pi(p)$ cover a dense
open subset of $X$, so $\Omega_p$ is dense in $X$.
Since $\Omega_p\subset R_p$ and $i_p:\Omega_p\to\Omega_p$ is an
involution, $R_p$ is dense and $i_p:R_p\to X$ is dominant.
This verifies condition~(a) of Proposition~\ref{prop:removal},
including when $p$ lies on the ramification divisor.

For a smooth cubic hypersurface, unirationality and the
moving-involution hypotheses are the classical inputs used in
\cite{KZcubic}. For every $p\in X$, projection from $p$ is
generically of degree two, and $i_p$ is its birational involution.
The third-intersection map is regular on a dense open subset of
$\{p\}\times X$ and agrees there with $i_p$. Thus $R_p$ is dense
and its restriction is dominant.

For either family, Lemma~\ref{lem:doublefibers} or
Lemma~\ref{lem:cubicfibers} verifies condition~(b) of
Proposition~\ref{prop:removal}:
\[
 \dim i^{-1}(Z)\le n+\dim Z\le2n-2.
\]
Proposition~\ref{prop:removal} now applies.
\end{proof}

\section{Ellipticity of punctured cones}\label{sec:cones}

We first give a sufficient condition for the complement of the zero section
of a line bundle to be algebraically elliptic.

\begin{lemma}\label{lem:bundlelift}
Let $X$ be a smooth variety and $M$ a line bundle on $X$. 
Suppose that for every $x\in X$ there are an open neighborhood 
$U$ of $x$, finitely many morphisms
\[
 S_i:U\times\PP^1\longrightarrow X,\qquad S_i(z,a_i)=z,
\]
and marked points $a_i\in\PP^1$ such that
\[
 \sum_i(dS_i)_{(x,a_i)}(0,T_{a_i}\PP^1)=T_xX,
 \qquad \deg S_i(x,\cdot)^*M<0\quad\text{for every }i.
\]
Then $M^\times$, the complement in ${\rm tot}(M)$ 
of the zero section, is algebraically elliptic.
\end{lemma}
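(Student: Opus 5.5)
We will use Lemma~\ref{lem:localization}, applied to the smooth variety $M^\times$. Fix a point $v\in M^\times$ lying over $x=\rho(v)$, where $\rho\colon M^\times\to X$ is the projection. We must produce finitely many local rank-one sprays near $v$, with values in $M^\times$, whose parameter derivatives span $T_vM^\times$. This tangent space is an extension of $T_xX$ by the vertical line. So it is enough to find two kinds of sprays: sprays whose derivatives project onto a spanning set of $T_xX$, and one spray whose derivative is vertical and nonzero.

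The vertical direction is easy. Use the fiberwise $\Gm$-action and set $(w,t)\mapsto (1+t)w$. This is not defined at $t=-1$, so we use instead $(w,t)\mapsto e(t)\,w$ with $e(t)$ a nonvanishing function on $\A^1$. Only constants are nonvanishing on $\A^1$, so this route fails. The remedy is that a local spray only has to be defined on $U\times\A^1$ with values in $M^\times$. Locally over $U$, trivialize $M|_U\cong U\times\A^1$ and consider $(z,c,t)\mapsto(z,c+t)$. This leaves $M^\times$ when $c+t=0$, so it must also be avoided. Instead we take the vertical spray from the horizontal lifting construction below, with a nonconstant degree term.

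For the horizontal directions, fix $i$ and put $f_z=S_i(z,\cdot)\colon\PP^1\to X$. The pullback $f_z^*M$ is a line bundle on $\PP^1$ of degree $-d_i<0$, so $f_z^*M\cong\OO(-d_i)$. On $\PP^1$ minus a point $b_i\neq a_i$, identified with $\A^1$, the bundle $f_z^*M$ is trivial. Hence $f_z^*M^\times$ restricted to $\A^1$ is isomorphic to $\A^1\times\Gm$. A point $w\in M_z^\times$ lies over $a_i=0$, and we transport it along $\A^1$ using a nowhere-vanishing section $\tau_z$ of $f_z^*M$ over $\A^1$, normalized by $\tau_z(a_i)=w$. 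This is where the negative degree matters. A section of $\OO(-d)$ over $\A^1$ that is nowhere zero there has a pole of order $d$ at $b_i$. Its fibers are therefore determined up to $\Gm$, with no obstruction. We need such trivializations to be chosen algebraically in $z\in U$ (after shrinking $U$). Concretely, take the family $f\colon U\times\PP^1\to X$ and the pullback $f^*M$. After shrinking $U$, the bundle $f^*M$ is isomorphic to $\pr_2^*\OO(-d_i)\otimes\pr_1^*N$, since $\Pic(U\times\PP^1)=\Pic U\times\mathbb Z$; moreover $N$ can be trivialized on a smaller $U$. This gives $\tau$ globally and defines
\[
 \tilde S_i(w,t)=\tau_{\rho(w)}(t)\in M^\times_{S_i(\rho(w),t)}.
\]
Then $\tilde S_i(w,0)=w$, and its derivative projects to $(dS_i)_{(x,a_i)}$ of the tangent vector. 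Thus the projections span $T_xX$.

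For the vertical spray we use the $\Gm$-ambiguity: replace $\tau$ by $g(t)\tau$, where $g$ is any nonvanishing function on $\PP^1\setminus\{b_i\}$. Such a $g$ must be a constant on $\A^1$. So instead we let the pole point move. Choose $g(t)=t^{k}\cdot$(trivialization) on $\PP^1\setminus\{0,\infty\}$; equivalently, alter the trivializing section by a factor $(t-c)/(t-c')$, which changes the vertical component at $t=0$ nontrivially while keeping the section nonvanishing on the affine chart. The main obstacle is exactly this step: producing a genuinely vertical derivative with values in $M^\times$. I expect it to be resolved by combining two of the horizontal sprays, or one spray $S_i$ composed with a fiber reparametrization, so that the horizontal parts cancel while the vertical parts, which depend on the choice of normalization, do not. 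Negativity of degree is what allows this freedom. Once all directions are spanned, Lemma~\ref{lem:localization} gives ellipticity of $M^\times$.
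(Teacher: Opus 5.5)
\textbf{Gap: the vertical direction is never produced, and this is exactly the step you leave open.}

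With one fixed pole $b_i$ for each $S_i$, your lifts $\tilde S_i$ are legitimate local sprays. Their parameter derivatives at $w$ are one particular lift of each vector $(dS_i)_{(x,a_i)}(\partial_t)$, and the vertical component of that lift is fixed by the arbitrary choice of $b_i$. If there are exactly $\dim X$ maps $S_i$, the derivatives span a $\dim X$-dimensional subspace of the $(\dim X+1)$-dimensional space $T_wM^\times$. In general nothing forces the span to contain $\ker(d\rho)_w$.

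The repairs you sketch do not work as stated. The factor $t^k$ kills the section at $t=0=a_i$, where it must equal $w$. A factor $(t-c)/(t-c')$ introduces a zero or a pole on the chart $\PP^1\setminus\{b_i\}$, unless it moves the pole, which forces a new chart and a new spray.

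Your claim that negativity of the degree is what makes the horizontal lift possible is also misplaced: for $d_i=0$ a constant section lifts equally well. The degree hypothesis is needed only for the vertical direction, and your argument never uses it. That alone shows it cannot be complete, since for $d_i=0$ the conclusion genuinely fails (Remark~\ref{rem:degree}).

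\textbf{The missing idea: let the pole vary.} Fix $i$ and take two poles $b\neq b'$, both different from $a_i$. Let $\tau_b$, $\tau_{b'}$ be the sections of $\OO(-d_i)$ that are nonvanishing off $b$, resp.\ off $b'$, and take the value $w$ at $a_i$.
\begin{itemize}
\item Their ratio is a rational function with divisor $d_i(b'-b)$. Hence it has the form $c\,\phi^{d_i}$, with $\phi$ a M\"obius transformation satisfying $\phi(a_i)\neq 0,\infty$.
\item Its derivative at $a_i$ is nonzero precisely because $d_i\neq0$.
\item Parametrize the two affine charts $\PP^1\setminus\{b\}$ and $\PP^1\setminus\{b'\}$ with the same tangent vector at $a_i$. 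Then the two lifted sprays have the same horizontal derivative at $w$, and their derivatives differ by a nonzero vertical vector.
\end{itemize}
For $d_i=1$, where $\OO(-1)^\times\cong\A^2\setminus\{0\}$, this is transparent. The lift through $w$ over $\PP^1\setminus\{[\beta]\}$, with $\beta\in k^2\setminus\{0\}$, is the affine line $\{w+s\beta: s\in k\}$, an orbit of a transvection. Two such lines with $[\beta]\neq[\beta']$ span $T_w\A^2$, which contains the radial direction. For general $d_i$, pass to the \'etale quotient $\A^2\setminus\{0\}\to Q_i$.

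This is precisely the paper's proof. It identifies $(S_i^*M)^\times$ with $U\times Q_i$, where $Q_i=\OO_{\PP^1}(-d_i)^\times\cong(\A^2\setminus\{0\})/\mu_{d_i}$. It then uses $\Ga$-subgroups of $\mathrm{SL}_2$, whose orbits are exactly these horizontal lifts with varying poles, to span all of $T_\xi Q_i$. Consequently each $V_i$ already contains $\ker(d\rho)_y$.
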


\begin{proof}
Write $Y=M^\times$ and let $\rho:Y\to X$ be the bundle projection. 
Fix $x\in X$ and choose $U, S_i$ and $a_i$ as before. Shrink $U$ 
so that it is connected and $M|_U$ is trivial.

The projective-bundle formula for the Picard group gives
\[
 \Pic(U\times\PP^1)\simeq\Pic(U)\oplus\mathbb Z,
\]
where the second summand is generated by 
$\pr_{\PP^1}^*\OO_{\PP^1}(1)$; see \cite[Remark~1.8(b)]{Isch}. 
In particular,
\[
 d_i=-\deg S_i(z,\cdot)^*M
\]
is independent of $z\in U$ and is positive. 
Consider the induced  line bundle $S_i^*M$ on $U\times\PP^1$. 
Its restriction to the marked section $U\times\{a_i\}$ 
can be identified with $M|_U$. After choosing a basis 
of $\OO_{\PP^1}(-d_i)_{a_i}$, we can therefore choose an isomorphism
\[
 S_i^*M\simeq\pr_U^*(M|_U)\otimes
                   \pr_{\PP^1}^*\OO_{\PP^1}(-d_i)
\]
whose restriction to $U\times\{a_i\}$ is the identity on $M|_U$.

Fix a trivialization of $M|_U$. Removing the zero sections 
identifies $(S_i^*M)^\times$ with $U\times Q_i$, where
\[
 Q_i=\OO_{\PP^1}(-d_i)^\times,\qquad q_i:Q_i\longrightarrow\PP^1.
\]
The natural map from the pullback bundle to $M$ induces a morphism
\[
 F_i:U\times Q_i\longrightarrow Y,\qquad
 \rho(F_i(z,\xi))=S_i(z,q_i(\xi)).
\]
Our normalization along the marked section identifies $Y|_U$ 
with $U\times q_i^{-1}(a_i)$. Let $\alpha_i:Y|_U\hookrightarrow U\times Q_i$ 
denote the resulting inclusion. Then
\[
 F_i\circ \alpha_i=\id_{Y|_U}.
\]
For each fixed $z\in U$, the map $F_i$ thus identifies 
$\{z\}\times q_i^{-1}(a_i)$ with the fiber $\rho^{-1}(z)$.

We now construct sprays on $Q_i$. The isomorphism
$\OO_{\PP^1}(-1)^\times\simeq\A^2\setminus\{0\}$ 
sends a nonzero vector $v$ to $([v],v)$. 
Taking the $d_i$th tensor power in each fiber gives
\[
 ([v],v)\longmapsto([v],v^{\otimes d_i}),\qquad
 Q_i\simeq(\A^2\setminus\{0\})/\mu_{d_i}.
\]
Here $\mu_{d_i}$ acts by scalar multiplication. 
It commutes with $\mathrm{SL}_2$, whose transitive 
action on $\A^2\setminus\{0\}$ therefore descends to 
a transitive action on $Q_i$. 
Since $\mathrm{SL}_2$ is generated by its 
$\Ga$-subgroups, $Q_i$ is flexible in terminology 
of \cite[Proposition~1.1]{AKZ12}.  
Thus, by \cite[Proposition 1.8]{AFKKZ} one can find 
$\Ga$-subgroups $u_j:\Ga\to\mathrm{SL}_2, \, j=1, \ldots, l$ such that 
for every $\xi\in Q_i$ 
the derivatives of  $t\mapsto u_j(t)\cdot\xi$ at $t=0$ span $T_\xi Q_i$.

For $y\in Y|_U$, write $\alpha_i(y)=(z,\xi_i(y))$, 
with $z=\rho(y)$, and define
\[
 \widetilde s_{ij}:Y|_U\times\A^1\longrightarrow Y,\qquad
 \widetilde s_{ij}(y,t)=F_i\bigl(z,u_j(t)\cdot\xi_i(y)\bigr).
\]
These are local rank-one sprays with values in $Y$, 
since $\widetilde s_{ij}(y,0)=y$. Their images need not lie in $Y|_U$.

Let $y\in\rho^{-1}(x)$. For each $i$, the parameter derivatives of 
these sprays indexed by $j$ span
\[
 V_i=(dF_i)_{\alpha_i(y)}\bigl(0,T_{\xi_i(y)}Q_i\bigr)\subset T_yY.
\]
The subspace $V_i$ contains $\ker(d\rho)_y$, 
because $F_i$ identifies the fiber over $a_i$ with $\rho^{-1}(x)$. 
Moreover, differentiating the identity for $\rho\circ F_i$ and 
using the surjectivity of $(dq_i)_{\xi_i(y)}$ gives
\[
 (d\rho)_y(V_i)=(dS_i)_{(x,a_i)}(0,T_{a_i}\PP^1).
\]
These images span $T_xX$ by hypothesis. The exact sequence
\[
 0\longrightarrow\ker(d\rho)_y\longrightarrow T_yY
 \xrightarrow{(d\rho)_y}T_xX\longrightarrow0
\]
therefore implies $\sum_i V_i=T_yY$. 
The sprays $\widetilde s_{ij}$ jointly dominate at every point 
of $\rho^{-1}(x)$. Since $x$ was arbitrary, 
Lemma~\ref{lem:localization} proves ellipticity of $Y$.
\end{proof}

\begin{remark}\label{rem:degree}
The nonzero-degree condition is essential to this construction. For $d_i=0$,
the surface $Q_i=\PP^1\times(\A^1)^\times$, where
$(\A^1)^\times=\A^1\setminus\{0\}$, admits no dominating algebraic spray:
every morphism from an affine space to $(\A^1)^\times$ is constant.
Although $M|_U$ is trivial, the pullback $S_i^*M$ retains the factor
$\pr_{\PP^1}^*\OO_{\PP^1}(-d_i)$, which is nontrivial on
$U\times\PP^1$, since $d_i>0$.
\end{remark}

We now apply Lemma~\ref{lem:bundlelift} to the complete rational curves
constructed in the proof of Proposition~\ref{prop:criterion}.

\begin{theorem}\label{thm:generalcone}
Let $X$ be a smooth complete unirational variety of dimension $n\ge2$
satisfying the moving-involution hypotheses of Proposition~\ref{prop:criterion}.
Then $X$ is algebraically elliptic. If $L$ is an ample line bundle on $X$,
then $(L^{-1})^\times$ is algebraically elliptic.
\end{theorem}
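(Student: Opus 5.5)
The first assertion is exactly Proposition~\ref{prop:criterion}, so the content is the statement about $(L^{-1})^\times$. I will apply Lemma~\ref{lem:bundlelift} with $M=L^{-1}$. The families of complete rational curves it needs are already built in the proof of Proposition~\ref{prop:criterion}: before the lines $L_i$ are punctured to define the sprays $s_i$ in \eqref{eq:spray}, the map $\Psi$ is regular on all of $W\times L_i$.

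Fix $x\in X$ and choose $a\in D_x$, $p=\varphi(a)$, $y=i_p(x)\in V$ exactly as in that proof, so that \eqref{eq:rank} holds. Lemma~\ref{lem:avoid} gives projective lines $L_1,\dots,L_n$ through $a$ with independent tangent directions and a neighborhood $W$ of $y$ such that $\Psi$ is a morphism on each $W\times L_i$. Choose $U\subset\Omega_p$ with $i_p(U)\subset W$, identify $L_i\simeq\PP^1$, take the marked point $a_i=a$, and define
\[
 S_i:U\times L_i\longrightarrow X,\qquad S_i(z,b)=\Psi\bigl(i_p(z),b\bigr).
\]
This is a morphism on the whole product, not just on $U\times(L_i\setminus\{q_i\})$. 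It satisfies $S_i(z,a)=i_p(i_p(z))=z$.

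It remains to check the two hypotheses of Lemma~\ref{lem:bundlelift}.

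\emph{Spanning.} We have $(dS_i)_{(x,a)}(0,T_aL_i)=D_2\Psi_{(y,a)}(T_aL_i)$. By \eqref{eq:rank}, $D_2\Psi_{(y,a)}$ is an isomorphism, and the tangent lines $T_aL_i$ span $T_a\PP^n$. Hence these images span $T_xX$.

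\emph{Degree.} The curve $S_i(x,\cdot):\PP^1\to X$ has nonzero derivative at $a$, again because $D_2\Psi_{(y,a)}$ is injective, so it is nonconstant. A nonconstant morphism from $\PP^1$ is finite onto its image curve. Since $L$ is ample, $\deg S_i(x,\cdot)^*L$ equals $\deg(S_i(x,\cdot))$ times the degree of $L$ on the image curve, which is positive. Therefore $\deg S_i(x,\cdot)^*L^{-1}<0$ for every $i$.

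Lemma~\ref{lem:bundlelift} then yields ellipticity of $(L^{-1})^\times$.

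The only delicate point is that Lemma~\ref{lem:bundlelift} needs morphisms defined on the complete $U\times\PP^1$, not just on $U\times\A^1$. This is precisely why the indeterminacy locus $I$ must be avoided along entire lines, which is what the fiber bound \eqref{eq:fiberbound} combined with Lemma~\ref{lem:avoid} guarantees. The degree condition only needs checking at the point $x$ itself, because Lemma~\ref{lem:bundlelift} already shows the degree is independent of $z\in U$.
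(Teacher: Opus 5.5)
Your proposal is correct and follows essentially the same route as the paper: you extend the sprays from the proof of Proposition~\ref{prop:criterion} to morphisms on the complete lines $L_i$, which is possible because $\Psi$ is regular on $W\times L_i$, obtain spanning from \eqref{eq:rank} and negativity of $\deg S_i(x,\cdot)^*L^{-1}$ from nonconstancy plus ampleness of $L$, and then apply Lemma~\ref{lem:bundlelift}. Your extra justifications (nonconstancy via injectivity of $D_2\Psi_{(y,a)}$, and the degree computation through the finite map onto the image curve) spell out details that the paper states only briefly.
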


\begin{proof}
Proposition~\ref{prop:criterion} proves ellipticity of $X$. 
Its proof also supplies, near every $x\in X$, morphisms
\[
 S_i:U\times\PP^1\longrightarrow X,\qquad S_i(z,a_i)=z,
\]
whose parameter derivatives at $(x,a_i)$ span $T_xX$: 
the sprays in~\eqref{eq:spray} extend to the complete 
parameter lines $L_i$, since $\Psi$ is regular on $W\times L_i$. 
Each map $S_i(x,\cdot)$ is nonconstant. If $L$ is ample, 
then $X$ is projective, since it is complete. Put $M=L^{-1}$. We have
\[
 \deg S_i(x,\cdot)^*M=-\deg S_i(x,\cdot)^*L<0.
\]
Lemma~\ref{lem:bundlelift} proves ellipticity of $M^\times$, 
which is the punctured affine cone associated with $L$.
\end{proof}

\begin{proof}[Proof of Corollary~\ref{cor:cone}]
Theorem~\ref{thm:generalcone} proves statement~(a). 
Statement~(b) follows from \cite[Theorem~1.2]{Kus}. 
Over $\mathbb C$, statement~(c) follows from \cite[Theorem~1.6]{For}. 
Finally, $Y$ is quasiaffine and elliptic, 
so \cite[Proposition~6.1]{KZcone} gives~(d).
\end{proof}

\begin{remark}
Theorem~\ref{thm:generalcone} applies to smooth 
cubic hypersurfaces as well as to smooth quartic double spaces. 
In the cubic case, the third-intersection involutions 
satisfy the assumptions of Proposition~\ref{prop:criterion}, 
and unirationality is classical. 
Thus the theorem gives a new proof of the ellipticity assertion 
in \cite[Corollary~1.4]{KZcubic} for cones defined by ample line bundles. 
The earlier proof invoked \cite[Corollary~3.8]{KZcone} without 
establishing the required curve-orbit property for cubics. 
The present proof supplies the missing argument for these cones. 
Generalized cones defined by ample $\mathbb Q$-divisors 
are not included in Theorem~\ref{thm:generalcone}.
\end{remark}

\begin{corollary}\label{cor:nontrivialbundle} Let $X$ be 
a smooth quartic double space of dimension $n\ge 2$.
Then for every nontrivial line bundle $M$ on $X$, $M^\times$ 
is algebraically elliptic. 
\end{corollary}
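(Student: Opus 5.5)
The plan is to reduce to Theorem~\ref{thm:generalcone}, or more generally to Lemma~\ref{lem:bundlelift}, using two facts. The first is that $\Pic X$ is very small when $n\ge3$. The second is the canonical isomorphism $M^\times\simeq (M^{-1})^\times$, which sends a nonzero vector $v\in M_x$ to the dual vector $v^{-1}\in M_x^{-1}$. This fiberwise inversion is an isomorphism of varieties over $X$. So it suffices to prove ellipticity for one of $M$ and $M^{-1}$, and we may replace $M$ by $M^{-1}$ whenever convenient.

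\textbf{Case $n\ge3$.} Here $X$ is a smooth hypersurface of degree $4$ in the weighted projective space $\PP(1^{n+1},2)$, and it avoids the singular point $(0:\cdots:0:1)$. A Grothendieck--Lefschetz type argument, or the topological Lefschetz theorem for cyclic covers over $\mathbb C$ followed by the Lefschetz principle, gives $\Pic X\simeq\mathbb Z\cdot H$ with $H=\pi^*\OO_{\PP^n}(1)$ ample. I would cite this rather than reprove it. A nontrivial $M$ is then $\pm mH$ with $m>0$. After replacing $M$ by $M^{-1}$ if needed, $M=L^{-1}$ with $L=mH$ ample, and Theorem~\ref{thm:generalcone} applies directly.

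\textbf{Case $n=2$.} Now $X$ is a del Pezzo surface of degree two, and $\Pic X\simeq\mathbb Z^8$. A nontrivial $M$ need be neither ample nor antiample, so the argument must use Lemma~\ref{lem:bundlelift} with curves adapted to $M$.

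The conic classes $C$ (those with $C^2=0$ and $-K_X\cdot C=2$) span $\Pic X\otimes\mathbb Q$. Hence $M\cdot C\ne0$ for some conic class $C$, and after inverting $M$ we may assume $M\cdot C<0$. Choose a second conic class $C''$ with $C\cdot C''=1$ and put $D_k=kC+C''$. Then
\[
 D_k^2=2k,\qquad K_X\cdot D_k=-2k-2,
\]
so $p_a(D_k)=0$. Moreover $|D_k|$ is base-point free of dimension $2k+1$, and $M\cdot D_k=k\,M\cdot C+M\cdot C''<0$ for $k\gg0$. Through every point $x$, smooth rational members of $|D_k|$ pass in every tangent direction, because imposing a point and a tangent direction costs only two conditions.

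It remains to turn these curves into morphisms $S_i:U\times\PP^1\to X$ with $S_i(z,a_i)=z$. The plan is to choose the curves algebraically in $z$ and parametrize them uniformly. For instance, for $z$ near $x$ one can take the member of $|D_k|$ through $z$ with a fixed, suitably translated tangent condition, and parametrize it by the smooth conic bundle given by $|C|$. Alternatively, one can use rationality of $X$ to transport to $\PP^2$ and use families of rational curves of fixed degree through $z$, parametrized by lines through a fixed auxiliary point. Once two such families exist near every $x$, with independent tangent vectors at $(x,a_i)$ and $\deg S_i(x,\cdot)^*M<0$, Lemma~\ref{lem:bundlelift} gives the claim.

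I expect this last step to be the main obstacle: producing honest morphisms on $U\times\PP^1$, rather than merely curve classes, with the marked point moving along $z$. This is exactly what the moving-involution construction supplied for free in the ample case.
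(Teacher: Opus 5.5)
Your case $n\ge3$ is exactly the paper's argument: $\Pic X=\mathbb Z\,\pi^*\OO_{\PP^n}(1)$, the fiberwise inversion $M^\times\simeq(M^{-1})^\times$, and then Theorem~\ref{thm:generalcone}. The gap is the case $n=2$. The paper constructs nothing there. It notes that $X$ is a rational del Pezzo surface of degree two and cites \cite[Theorem~5.11]{Kal}. Your substitute argument stops at the step that carries all the content. That step is to produce morphisms $S_i\colon U\times\PP^1\to X$ with $S_i(z,a_i)=z$ for a \emph{fixed} $a_i$ and all $z$ near $x$. Without these, Lemma~\ref{lem:bundlelift} cannot be invoked, so as written the case $n=2$ is not proved.

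The pointwise input you do state is also false. Since $D_k\cdot C=C''\cdot C=1$, every irreducible member of $|D_k|$ is a section of the conic bundle $\phi_C\colon X\to\PP^1$ defined by $|C|$. Such a curve is never tangent to a fiber of $\phi_C$. It also cannot pass through the node of any of the six singular fibers, because its local intersection number with the fiber there would be at least $2$. So ``every point, every tangent direction'' fails, and near those nodes $|D_k|$ is useless.

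Your route can be completed, but it needs real work:
\begin{itemize}
\item The map $\phi_C\times\phi_{C''}\colon X\to\PP^1\times\PP^1$ is birational. It contracts six disjoint $(-1)$-curves, one component of each singular fiber of $\phi_C$, and $D_k$ is the pullback of $\OO(k,1)$.
\item Take $x$ off the contracted curves. Let $(s_0,t_0)$ be affine coordinates of the image of $z$, chosen so that $(\infty,\infty)$ is not a center. Let $g$ be a fixed general polynomial of degree $k-2$. Then $\tau\mapsto\bigl(s_0+\tau,\ t_0+v\tau+\tau^2g(s_0+\tau)\bigr)$ gives honest morphisms $U\times\PP^1\to X$ with $a_i=0$. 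These avoid the centers for $z$ near $x$, and two values of $v$ give independent directions.
\item At points of the contracted curves you must switch to another pair $(C,C'')$ and check that a suitable pair exists.
\item Switching is harmless for the sign condition. Lemma~\ref{lem:bundlelift} really only needs $\deg S_i(x,\cdot)^*M\ne0$ for each $i$, because the inversion $M^\times\simeq(M^{-1})^\times$ is an isomorphism over $X$ and can be applied locally.
\end{itemize}
Your proposal does none of this.
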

\begin{proof}
For $ n \ge 3$, $\Pic(X) = \mathbb Z[\pi^*\OO_{\PP^n}(1)]$, 
so $M$ or $M^{-1}$ is ample, and 
$M^\times \simeq (M^{-1})^\times$. The assertion therefore follows from Theorem~\ref{thm:generalcone}. For $n = 2$, $X$ 
is a rational del Pezzo surface of degree 2,
and the assertion follows from \cite[Theorem~5.11]{Kal}. 
\end{proof}

\medskip

\centerline{AI disclosure}

\medskip

During the development of this project, 
the authors used GPT-6.0 Astra 
as a research assistant. 
After being directed to some previous work by the authors,
specific examples and mathematical techniques, the model
assisted in developing some of the arguments 
presented in this paper. 
The authors independently
checked the mathematical arguments, rewrote the proofs, 
and developed the consequences presented
here. AI tools were also used for grammatical corrections 
and improvements of the exposition. The
authors take full responsibility for the content and 
correctness of the paper.

\end{document}